\newtheorem{theorem}{Theorem}[section]
\theoremstyle{definition}
\newtheorem{corollary}[theorem]{Corollary}
\theoremstyle{remark}
\newtheorem{remark}[theorem]{Remark}
\numberwithin{equation}{section}
\begin{document}

\begin{frontmatter}

\title{Gradient recovery for elliptic interface problem: II. immersed finite element methods}

\author[mainaddress]{Hailong Guo}
\ead{hlguo@math.ucsb.edu}

\author[mainaddress]{Xu Yang\corref{mycorrespondingauthor}}
\cortext[mycorrespondingauthor]{Corresponding author}
\ead{xuyang@math.ucsb.edu}

\address[mainaddress]{Department of Mathematics, University of California Santa Barbara, CA, 93106}

\begin{abstract}
This is the second paper on the study of gradient recovery for elliptic interface problem. In our previous work [H. Guo and X. Yang, 2016, arXiv:1607.05898], we developed gradient recovery finite element method based on body-fitted mesh. In this paper,
we propose new gradient recovery methods based on two immersed interface finite element methods: symmetric and consistent immersed finite method [H. Ji, J. Chen and Z. Li, {\it J. Sci. Comput.}, 61 (2014), 533--557] and Petrov-Galerkin immersed finite element method [T.Y. Hou, X.-H. Wu and Y. Zhang, {\it Commun. Math. Sci.}, 2 (2004), 185--205, and S. Hou and X.-D. Liu, {\it J. Comput. Phys.}, 202 (2005), 411--445].  Compared to body-fitted mesh based gradient recover methods, immersed finite element methods provide a uniform way of recovering gradient on regular meshes. Numerical examples are presented to confirm the superconvergence of both gradient recovery methods. Moreover, they provide asymptotically exact {\it a posteriori} error estimators for both immersed finite element methods.
\end{abstract}

\begin{keyword}
elliptic interface problem\sep immersed finite element method\sep  gradient recovery\sep superconvergence\sep  {\it a posteriori} error estimator

\smallskip
\MSC[2010] 65L10\sep 65L60\sep 65L70
\end{keyword}

\end{frontmatter}


\section{Introduction}

We are interested in developing gradient recovery methods for the following elliptic interface problem
\begin{align}
  -\nabla \cdot (\beta(z) \nabla u(z)) &= f(z),  \quad z \text{ in } \Omega\setminus \Gamma, \label{eq:model}\\
   u & = 0, \quad\quad\,\, z \text{ on } \partial\Omega, \label{eq:bnd}
\end{align}
where $\Omega$ is a bounded polygonal domain with Lipschitz boundary
$\partial \Omega$ in $\mathbb{R}^2$, and $\Gamma$ is the interface which spits $\Omega$ into two disjoint subdomains $\Omega^-$ and $\Omega^+$.
Note that the interface $\Gamma$ can be given by a zero level set of level set function \cite{Osher2003, Sethian1996}.

The interface problem is characterized by the following piecewise smooth diffusion coefficient $\beta(z) \ge \beta_0$,
\begin{equation}\label{eq:test}
\beta(z) =
\left\{
\begin{array}{ccc}
    \beta^-(z) &  \text{if } z\in \Omega^-, \\
   \beta^+(z)  &   \text{if } z\in \Omega^+,
\end{array}
\right.
\end{equation}
which has a finite jump of function value across the interface $\Gamma$. We consider homogeneous jump conditions at the interface $\Gamma$ as below, 
\begin{align}
   [u]_{\Gamma} &= u^+-u^-=0, \label{eq:valuejump}\\
   [\beta \partial_nu]_{\Gamma} &= \beta^+u_n^+ - \beta^-u^-_n = 0, \label{eq:fluxjump}
\end{align}
where $\partial_nu=\nabla u\cdot n$ denotes the normal flux  with $n$ being the unit outer normal vector of the interface $\Gamma$.

Simulation of the interface problem \eqref{eq:model}--\eqref{eq:fluxjump} is an important problem in the fields of fluid dynamics and material science, where background is composed by rather different materials. Discontinuities of coefficients at interface lead to nonsmooth solutions in general, and thus raise a challenge for designing efficient numerical methods for \eqref{eq:model}--\eqref{eq:fluxjump}.

Two mainstreams of existing numerical methods for \eqref{eq:model}--\eqref{eq:fluxjump} are body-fitted mesh-based methods and immersed interface methods. Body-fitted mesh-based methods resolve discontinuities by generating mesh grids to align with interface, and then use standard finite element methods. This type of methods can provide high order accuracy, with nearly optimal error estimates established in, for example, \cite{Babuska1970,BrambleKing1996,ChenZou1998,Xu1982}. Despite its merit of accuracy, a main drawback of such methods
is the requirement of a body-fitted mesh generator, which can be technically involved and time consuming especially when the geometry
of interface becomes complicated. Therefore, it will be more convenient to develop numerical methods based unfitted mesh (e.g. Cartesian mesh).
A rich literature can be found in this direction including immersed boundary method (IBM) by Peskin \cite{Peskin1977, Peskin2002} and immersed interface method (IIM) by Leveque and Li \cite{LevequeLi1994}, just to name a few.

In IBM, Dirac $\delta$-function is used to model discontinuity and discretized to distribute a singular source to nearest grid point. In IIM, a special finite difference scheme is constructed near interface to get an accurate approximation of the solution. Moreover, IIM was also developed in the framework of finite element method \cite{Li1998, LiLinLin2004, LiLinWu2003}. Interested readers are referred to \cite{LiIto2006} for a review of this type of methods. In \cite{LiLinWu2003}, Li, Lin and Wu proposed a nonconforming immersed finite element method (IFEM) by modifying the basis functions on elements
crossing interface. They also established optimal error estimates in $L^2$ and $H^1$ norms in \cite{ChouwKwakWee2010}. However, it only achieved first order (suboptimal) convergence
in $L^{\infty}$ norm due to discontinuities of test functions. To overcome this drawback, Ji, Chen, and Li added a correction term into the bilinear form of the nonconforming IFEM to penalize the discontinuities at interface \cite{JiChenLi2014}, which showed optimal convergence rate in $L^2$ and $H^1$ norms. They also numerically verified that the method achieved second order convergence in $L^{\infty}$ norm.  Another weak form formulation was derived in \cite{HouLiu2005,HouSongWangZhao2013,HouWuZhang2004} based on Petrov-Galerkin method for the discretization of elliptic interface problem, which has been numerically verified to have optimal convergence rate in $L^2$, $H^1$ and
$L^{\infty}$ norms.

Superconvergence analysis of elliptic interface problem has been a challenging problem due to the of lack regularity of solution at interface. Standard gradient recovery methods \cite{ZZ1992a, ZZ1992b, ZhangNaga2005,
NagaZhang2005, AinsworthOden2000, GuoZhang2015} only work well for elliptic problems with smooth coefficient. As far as we know, only limited work has been done in the development of gradient recovery methods for elliptic interface problem. For example, \cite{Chou2012,Chou2015} proposed two special interpolation formula to recover flux for linear and quadratic immersed finite element method in one-dimension.
A more recent work \cite{WeiChenHuangZheng2014} showed a  supercloseness between finite element solution and linear interpolation of the true solution for linear finite element method based on body-fitted mesh. In our previous work \cite{GuoYang2016}, we developed an immerse polynomial preserving recovery (IPPR) method based on body-fitted mesh and proved its superconvergence for both mildly unstructured and adaptive refined meshes.

As a continuous study of \cite{GuoYang2016}, we propose new gradient recovery methods in this paper based on
two immersed finite element methods: symmetric and consistent immersed finite element (SCIFEM)  \cite{JiChenLi2014} and
 Petrov-Galerkin immersed finite element method (PGIFEM) \cite{HouLiu2005,HouSongWangZhao2013,HouWuZhang2004}. The development of the methods is based on the following two observations: firstly, the solution is piecewise smooth on each subdomain despite of its low global regularity;  secondly, finite element solution is discontinuous at interface even though the exact solution is continuous. Accordingly, we design the gradient recovery methods by two steps: enriching and smoothing. We first define an enriching operator to enrich the discontinuous finite element solution into continuous one on a local body-fitted
 mesh obtained by adding extra nodes \cite{LiLinWu2003}. Such type of
 enriching operator has been well studied for nonconforming finite element and plays an important role in \textit{a priori} error estimates \cite{Gudi2010} and convergence analysis of
 multigrid methods \cite{Brenner1996, Brenner1999, Brenner2003}. Then we apply the IPPR gradient recovery operator developed in \cite{GuoYang2016} to the enriched finite element solution. We prove that the proposed gradient recovery operator is a bounded linear operator, and numerically verify that the recovered gradient is $\mathcal{O}(h^{1.5})$ superconvergent to exact gradient. As a byproduct, we observe the $\mathcal{O}(h^{1.5})$ supercloseness
 between finite element solution and linear interpolation of true solution for both  SCIFEM  \cite{JiChenLi2014} and
 PGIFEM \cite{HouLiu2005,HouSongWangZhao2013,HouWuZhang2004}.

The rest of the paper is organized as follows. In Section~2, we briefly review two immersed finite element methods, SCIFEM and PGIFEM, as a preparation for designing gradient recovery methods. In Section~3, we first define
an enriching operator and prove several properties of the operator. Then, we propose the gradient recovery methods for SCIFEM and PGIFEM
and prove that the gradient recovery operator is a linear, bounded and consistent operator.  In Section~4, serval numerical examples are presented to confirm the superconvergence of the gradient recovery method. We make conclusive remarks in Section~5.

\section{Review on immersed finite element methods}
In this section, we briefly review two immersed finite element methods, symmetric and consistent immersed finite element method \cite{JiChenLi2014} and Petrov-Galerkin immersed finite element method \cite{HouLiu2005,HouSongWangZhao2013,HouWuZhang2004}, based on which we shall develop superconvergent gradient recovery methods for elliptic interface problem \eqref{eq:model}--\eqref{eq:fluxjump} in Section~3.

\subsection{Notations}
We first summarize the notations that will be used in this paper.
We will use standard
notations for Sobolev spaces and their associate norms given in \cite{BrennerScott2008, Ciarlet2002, Evans2008} .
 For a subdomain $A$
of $\Omega$, let $\mathbb{P}_m(A)$ be the space of polynomials of
degree less than or equal to $m$ in $A$ and $n_m$ be the
dimension of $\mathbb{P}_m(A)$ which equals to $\frac{1}{2}(m+1)(m+2)$.
$W^{k,p}(A)$ denotes the Sobolev space with norm
$\|\cdot\|_{k, p, A} $ and seminorm $|\cdot|_{k, p,A}$.
 When $p = 2$, $W^{k,2}(A)$ is simply  denoted by $H^{k}(A)$
 and the subscript $p$ is omitted in its associate norm and seminorm.
  As in \cite{WeiChenHuangZheng2014}, denote
 $W^{k,p}(\Omega^-\cup\Omega^+)$  as the function space consisting of piecewise Sobolev  function $w$  such
 that $w|_{\Omega^-}\in W^{k,p}(\Omega^-)$ and $w|_{\Omega^+}\in W^{k,p}(\Omega^+)$.  For the function
 space $W^{k,p}(\Omega^-\cup\Omega^+)$, define its associated norm  as
 \begin{equation*}
\|w\|_{k,p, \Omega^-\cup\Omega^+} = \left( \|w\|_{k,p, \Omega^-}^p + \|w\|_{k,p, \Omega^+}\right)^{1/p},
\end{equation*}
and associated seminorm as
 \begin{equation*}
|w|_{k,p, \Omega^-\cup\Omega^+} = \left( |w|_{k,p, \Omega^-}^p + |w|_{k,p, \Omega^+}\right)^{1/p}.
\end{equation*}

Let $C$ denote a generic positive constant which may be different at different occurrences.
For the sake of simplicity, we use $x\lesssim y$ to mean that $x\leq Cy$ for some constant $C$
independent of mesh size.

Without loss of generality,  we simply suppose   $\mathcal{T}_h$  is a uniform triangulation of $\Omega$
with $h = \mbox{diam}(T)$.   Assume $h$ is small enough so that the interface $\Gamma$ never crosses any edge
of $\mathcal{T}_h$ more than two times.
The elements of $\mathcal{T}_h$ can be divided into categories : regular element and interface element.
We call an element $T$ interface element if  the interface $\Gamma$ passes the interior of $T$; otherwise
we call it regular element.  Remark that if $\Gamma$ only passes two vertices of an element $T$, we treat
the element $T$ as a regular element. Let $\mathcal{T}^{i}_h$ and $\mathcal{T}^{r}_h$ denote
the set of all interface elements and regular elements respectively.  The set of all vertices of
$\mathcal{T}_h$ is denoted by $\mathcal{N}_h$.

\subsection{Variational formula}
The variational formulation to elliptic interface problem \eqref{eq:model}--\eqref{eq:fluxjump} is given by finding $u \in H^1_0(\Omega)$ such that
 \begin{equation}
(\beta\nabla u, \nabla v)  = (f, v) , \quad \forall v \in H^1_0(\Omega),
\label{eq:var}
\end{equation}
where $(\cdot, \cdot)$  is  standard $L_2$-inner product in the spaces $L^2(\Omega)$.
By the positiveness of $\beta$, Lax-Milgram  Theorem implies \eqref{eq:var} has  a unique solution.
\cite{ChenZou1998, RS1969} proved that $u\in H^{r}(\Omega^-\cup\Omega^+)$ for $0 \le r\le 2$ and
\begin{equation}
\|u\|_{r, \Omega^-\cup\Omega^+} \lesssim \|f\|_{0, \Omega} + \|g\|_{r-3/2, \Gamma},
\end{equation}
if $f\in L^2(\Omega)$ and $g \in H^{r- 3/2}(\Gamma)$.

%
%
%
%
\subsection{Immersed finite element methods}


The key idea of immersed interface methods is to
construct special basis functions in interface elements to incorporate jump conditions \eqref{eq:valuejump} and \eqref{eq:fluxjump}.
As an illustration, we consider a typical interface element $T$ as in Figure \ref{fig:intelem}.
Let $z_4$ and $z_5$ be the intersection points between the interface $\Gamma$ and edges of the element.  Connect the line segment
$\overline{z_4z_5}$ and it forms an approximation of interface $\Gamma$ in the element $T$, denoted by $\Gamma_h|_{T}$.
Then the element $T$ is spitted  into two parts:  $T^-$ and $T^+$.   The special basis $\phi_i$ on the interface element $T$ is constructed
as the following  piecewise linear function
\begin{equation}\label{eq:ifembasis}
\phi_i(z)=
\left\{
\begin{array}{ccc}
\phi^+_i  = a^++b^+x+c^+y, & z = (x, y)\in T^+,    \\
 \phi^-_i= a^-+b^-x+c^-y, & z = (x, y)\in T^-,      \\
\end{array}
\right.
\end{equation}
where the coefficients are determined by the following linear system
\begin{align}
 &\phi_i(z_1) = \delta_{i1}, \, \phi_i(z_2) = \delta_{i2},\, \phi_i(z_3) = \delta_{i3},\label{eq:valueeq}\\
& \phi^+_i(z_4) = \phi^-_i(z_4), \, \phi^+_i(z_5) = \phi^-_i(z_5), \,
\beta^+\partial_n\phi^+_i = \beta^-\partial_n\phi^-_i, \label{eq:fluxeq}
\end{align}
for $i = 1, 2, 3$.  The immersed finite element space $V_h$  \cite{LiLinWu2003} is defined as
\begin{align}
&V_h := \left\{v\in V_h:  v|_{T} \in V_h(T) \text{ and }  v \text{ is continuous on } \mathcal{N}_h, \right\},\\
&V_{h,0} = \left\{v\in V_h:   v(z) = 0  \text{ for all } z \in \mathcal{N}_h\cap \partial\Omega \right\},
\end{align}
where
\begin{equation}
V_h(T) :=
\left\{
\begin{array}{ll}
 \left\{v| v \in \mathbb{P}_1(T) \right\},&   \text{if } T \in \mathcal{T}^{r}_h;    \\
  \left\{v| v \text{ is defined by } \eqref{eq:ifembasis}-\eqref{eq:fluxeq}\right\},&   \text{if } T \in \mathcal{T}^{i}_h .   \\
\end{array}
\right.
\end{equation}
Note that in general $V_h$ is a nonconforming finite element space and
\cite{LiLinLin2004} shows it has optimal approximation capability.

\begin{figure}[ht]
    \centering
    \includegraphics[width=0.5\textwidth]{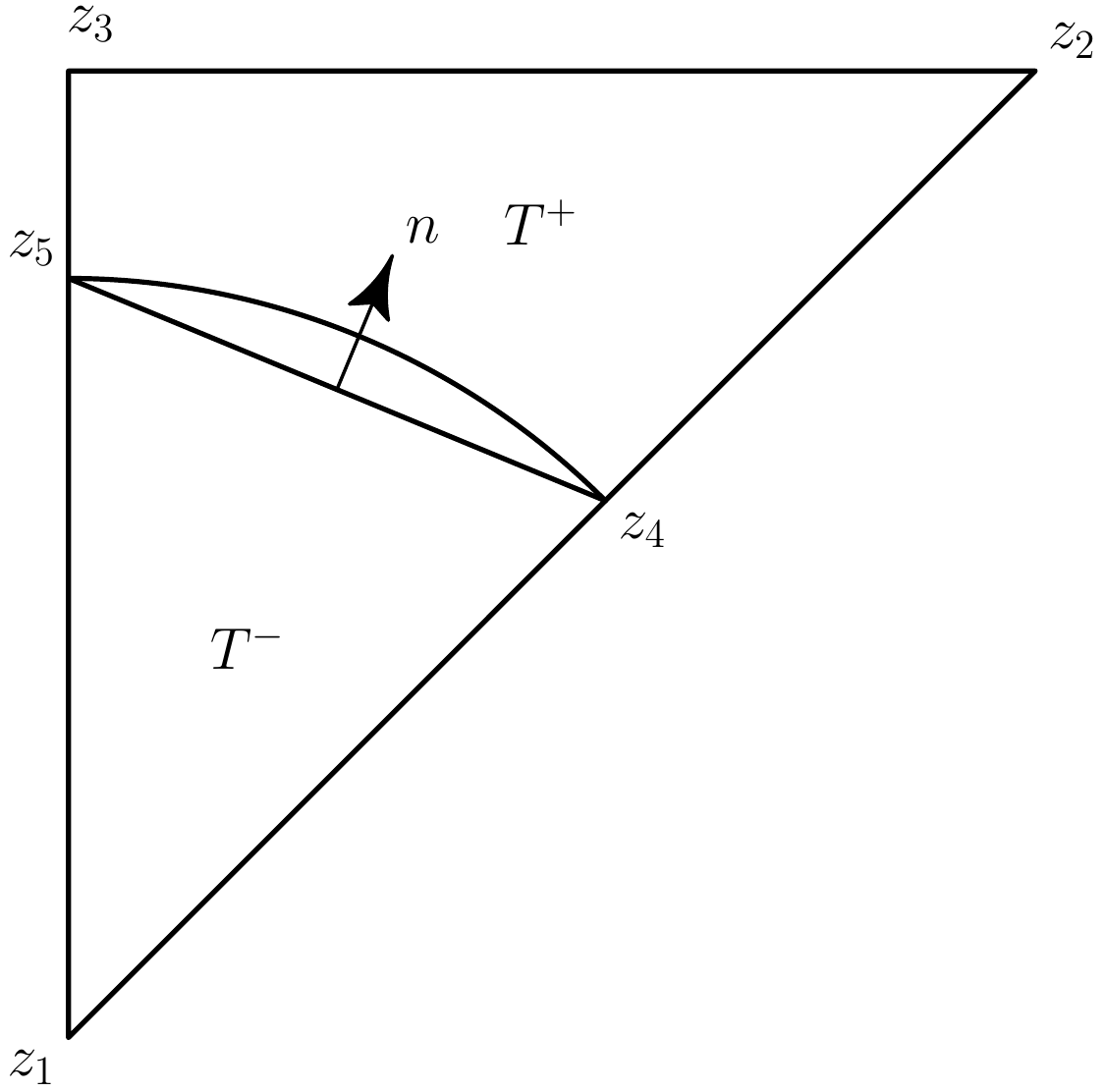}
    \caption{ Typical example of interface element.}
\label{fig:intelem}
\end{figure}

\subsubsection{Symmetric and consistent immersed finite element method}
Let $\mathcal{E}_h$ denote the set of all edges in $\mathcal{T}_h$, and then $\mathcal{E}_h$
consists of interface edge $\mathcal{E}_h^{i}$ and regular edge
$\mathcal{E}_h^{r}$, defined by
\begin{equation}
\mathcal{E}_h^{i} = \{e \in \mathcal{E}_h:  \mathring{e}\cap \Gamma \neq \emptyset\},
\,, \mathcal{E}_h^{r} = \mathcal{E}_h \setminus \mathcal{E}_h^{i}.
\end{equation}
For any interior edge $e$,  there exist two triangles $T_1$ and $T_2$ such that $T_1\cap T_2 = e$.
Denote $n_e$ as the unit normal of $e$ pointing from $T_1$ to $T_2$, and define
\begin{align}
 &\left\{\nabla u\right\} = \frac{1}{2}\left( \nabla u|_{T_1} + \nabla u|_{T_2}\right),\label{eq:edgeaverage}\\
 &[u] = u|_{T_1} - u|_{T_2}. \label{eq:edgejump}
\end{align}
The symmetric and consistent immersed finite element method (SCIFEM) \cite{JiChenLi2014} seeks
$u_h^{sc} \in V_{h,0}$ such that
\begin{equation}\label{eq:scifem}
a_h^{sc}(u_h^{sc}, v_h) = (f, v_h) , \quad \forall v_h \in V_{h,0},
\end{equation}
where
\begin{equation}\label{eq:scbilinear}
a_h^{sc}(u, v) = \sum_{T\in\mathcal{T}_h} \int_T \beta \nabla u \cdot \nabla vdx
+\sum_{e\in \mathcal{E}_h^{i} }\int_e (\{\beta  \nabla u\} [u] + \{\beta \nabla v\}[u])\cdot n_eds
\end{equation}

In  \cite{JiChenLi2014}, Ji, Chen, and Li  showed the bilinear form \eqref{eq:scbilinear} was consist and numerically 
verified its   coercivity.
 Moreover, \cite{JiChenLi2014} proved the following convergence results:
\begin{theorem}
 Let $u$ be the solution of \eqref{eq:model}--\eqref{eq:fluxjump} and $u_h$ be the
 solution of \eqref{eq:scifem}. Then the following error estimates  hold:
\begin{align}
 &\left( \sum_{T\in \mathcal{T}_h} |u-u_h^{sc}|_{H^1(T)}^2\right)^{1/2} \lesssim h \|u\|_{2, \Omega^+\cup\Omega^-},\\
  &\|u-u_h^{sc}\|_{0,\Omega} \lesssim h \|u\|_{2, \Omega^+\cup\Omega^-}.
\end{align}
\end{theorem}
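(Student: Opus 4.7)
I would follow the standard error analysis for interior-penalty-type nonconforming finite element methods, adapted to the immersed setting. Introduce the mesh-dependent energy norm
\begin{equation*}
\|v\|_h^2 := \sum_{T\in\mathcal{T}_h} |v|_{1,T}^2 + \sum_{e\in\mathcal{E}_h^{i}} h_e^{-1}\|[v]\|_{0,e}^2,
\end{equation*}
which captures both the broken $H^1$ behaviour of IFEM functions and the interface-edge jumps penalised by the extra terms in $a_h^{sc}$.

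The analysis then rests on three ingredients. (i) \emph{Coercivity}: $a_h^{sc}(v_h,v_h)\gtrsim \|v_h\|_h^2$ for all $v_h\in V_{h,0}$. As the excerpt notes, this is only numerically verified in \cite{JiChenLi2014}; a rigorous argument would control the cross term $\sum_e\int_e\{\beta\nabla v_h\}[v_h]\cdot n_e\,ds$ by Young's inequality combined with a trace/inverse estimate on the small subelements cut off by $\Gamma_h|_T$ — and this is exactly where delicate geometry of the interface cut enters, the main obstacle. (ii) \emph{Boundedness}: $|a_h^{sc}(w,v)|\lesssim \|w\|_{h,*}\|v\|_h$ with $\|\cdot\|_{h,*}$ augmenting $\|\cdot\|_h$ by edge norms of $\{\beta\nabla w\}$; this follows by Cauchy--Schwarz term by term. (iii) \emph{Consistency}: since $u\in H^1_0(\Omega)$ is continuous, $[u]=0$ across every interior edge, so $a_h^{sc}(u,v_h)=(f,v_h)$ for all $v_h\in V_{h,0}$, giving Galerkin orthogonality $a_h^{sc}(u-u_h^{sc},v_h)=0$.

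With these in hand, a C\'ea-type argument in $\|\cdot\|_h$ reduces the first estimate to a best-approximation problem:
\begin{equation*}
\|u-u_h^{sc}\|_h \lesssim \inf_{v_h\in V_{h,0}} \|u-v_h\|_{h,*}.
\end{equation*}
Choosing $v_h=I_h u$, the IFEM interpolant, one invokes the approximation result of \cite{LiLinLin2004} — or equivalently a direct elementwise Bramble--Hilbert argument on each $T\in\mathcal{T}_h^r$, together with a careful piecewise analysis on $T\in\mathcal{T}_h^i$ using the jump conditions \eqref{eq:valuejump}--\eqref{eq:fluxjump} to handle the edge-jump contribution via a trace inequality — to obtain $\|u-I_hu\|_{h,*}\lesssim h\|u\|_{2,\Omega^+\cup\Omega^-}$. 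Restricting to the broken $H^1$ part of $\|\cdot\|_h$ yields the first claim.

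For the $L^2$ estimate I would run an Aubin--Nitsche duality argument. Let $w\in H^1_0(\Omega)$ solve the adjoint interface problem with right-hand side $u-u_h^{sc}$ and homogeneous jump conditions; elliptic regularity for interface problems gives $w\in H^2(\Omega^+\cup\Omega^-)$ with $\|w\|_{2,\Omega^+\cup\Omega^-}\lesssim \|u-u_h^{sc}\|_{0,\Omega}$. Using symmetry of $a_h^{sc}$, consistency, and $[w]=0$ on interior edges,
\begin{equation*}
\|u-u_h^{sc}\|_{0,\Omega}^2 = a_h^{sc}(u-u_h^{sc},\,w-I_h w),
\end{equation*}
and the already-proved $H^1$ estimate together with $\|w-I_hw\|_{h,*}\lesssim h\|w\|_{2,\Omega^+\cup\Omega^-}$ produces the factor $h^2\|u\|_{2,\Omega^+\cup\Omega^-}\|u-u_h^{sc}\|_{0,\Omega}$, whence the second claim. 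The two places where real work is hidden are the coercivity assumption and the interface-robust interpolation estimate; both must be established with constants independent of how $\Gamma$ cuts individual elements.
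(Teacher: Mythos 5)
The first thing to say is that the paper contains no proof of this theorem for you to be compared against: it is quoted verbatim as a result of \cite{JiChenLi2014} (the sentence introducing it reads ``Moreover, \cite{JiChenLi2014} proved the following convergence results''), and the authors use it only as background for their gradient recovery construction. So the relevant benchmark is the analysis in that reference, and your outline --- consistency, energy-norm quasi-optimality via a C\'ea-type argument with the IFEM interpolant of \cite{LiLinLin2004}, then Aubin--Nitsche duality with the piecewise $H^2$ regularity quoted in Section~2.2 --- is exactly the standard route for this family of methods.

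That said, as a standalone proof your proposal has a genuine gap, and it sits precisely where you flag it: coercivity. Every downstream step (Galerkin orthogonality being useful at all, the quasi-optimality bound, and the duality identity) is conditional on $a_h^{sc}(v_h,v_h)\gtrsim\|v_h\|_h^2$, and you supply no argument for it beyond naming the obstacle. This is not a deferrable technicality here: the paper itself records that Ji, Chen and Li only \emph{numerically} verified coercivity, and the form \eqref{eq:scbilinear} contains only the symmetrized consistency terms on interface edges, with no stabilizing penalty term $\sum_{e}h_e^{-1}\int_e[u][v]\,ds$ of the kind your norm $\|\cdot\|_h$ implicitly assumes is controlled. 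A symmetric form without penalization is in general indefinite (the classical failure mode of unpenalized symmetric interior penalty methods), so coercivity with constants independent of how $\Gamma$ cuts individual elements cannot be recovered by Young's inequality plus trace/inverse estimates alone; it is an assumed, not established, ingredient --- both in your write-up and, apparently, in the literature this theorem comes from. Two smaller points: the bilinear form as printed in \eqref{eq:scbilinear} is not symmetric (the term $\{\beta\nabla u\}[u]$ is evidently a typo for $\{\beta\nabla u\}[v]$), so your appeal to symmetry in the duality step tacitly corrects the paper; and your duality argument actually yields $\|u-u_h^{sc}\|_{0,\Omega}\lesssim h^2\|u\|_{2,\Omega^-\cup\Omega^+}$, which is stronger than the $\mathcal{O}(h)$ bound stated in the theorem and therefore implies it --- the stated $L^2$ rate appears to be another typo, given that the surrounding text advertises optimal $L^2$ convergence.
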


\begin{remark}
 The main difference between SCIFEM and classical immersed finite element method \cite{LiLinWu2003} is that the bilinear form of SCIFEM \eqref{eq:scbilinear} contains
one more term to penalize the discontinuous of basis function at the intersecting points of interface and edge.
Numerical results in \cite{JiChenLi2014} show that SCIFEM has $O(h^2)$ convergence in $L^{\infty}$-norm.
\end{remark}

\subsubsection{Petrov-Galerkin immersed finite element method}
Denote the standard $C^0$ linear finite element space on $\mathcal{T}_h$
by $S_h$ and $S_{h,0} = S_h \cap H^1_0(\Omega)$.
Then the Petrov-Galerkin immersed finite element method (PGIFEM)  \cite{HouWuZhang2004,HouLiu2005,HouSongWangZhao2013} is to find
$u_h^{pg} \in V_{h,0}$  such that
\begin{equation}\label{eq:pvifem}
a_h(u_h^{pg}, v_h) = (f, v_h) , \quad \forall v_h \in S_{h,0},
\end{equation}
where
\begin{equation}
a_h(u, v) = \sum_{T\in\mathcal{T}_h} \int_T \beta \nabla u \cdot \nabla vdx.
\end{equation}

\begin{remark}
 To our best knowledge, there has been no analytical results on estimating PGIFEM, however,
 plenty of numerical simulations indicate that it can achieve optimal convergence rate in both $L_2$, $H_1$ and $L_{\infty}$ norms \cite{HouWuZhang2004,HouLiu2005,HouSongWangZhao2013}.
\end{remark}

\section{Gradient recovery for immersed finite element methods}
In the section, we systematically introduce gradient recovery methods for SCIFEM and PGIFEM reviewed in last section. We first define an enriching operator, and then apply the immersed polynomial preserving recovery operator \cite{GuoYang2016}
to the enriched finite element solution.
\subsection{Enriching operator}
To define the enriching operator, one needs to generate a local body-fitted  mesh $\widehat{\mathcal{T}}_h$ based on
$\mathcal{T}_h$ by adding new vertices into $\mathcal{N}_h$ which divides interface element into three subtriangles.
   Then the new triangulation is constructed as below \cite{LiLinWu2003}:
\begin{enumerate}
\item Keep all regular elements unchanged.
\item For each interface element $T$,  split it into a small  triangle and a quadrilateral
by connecting two intersection points, and then divide the quadrilateral into two subtriangles  by an
auxiliary line connecting a vertex and an intersection point. The choice of auxiliary line is made
so that there at least exists one angle between $\frac{\pi}{4}$ and $\frac{3\pi}{4}$ in the two new
subtriangles.
\end{enumerate}

\begin{remark}
 Note that the new triangulation can contain narrow triangles, and thus standard linear finite
 element method deteriorates on $\widehat{\mathcal{T}}_h$. However, the propose of introducing
 the body-fitted mesh $\widehat{\mathcal{T}}_h$ is just for enriching existing immersed finite element
 solution instead of solving interface problem directly on it.
 \end{remark}

Let $X_h$ be the $C^0$ linear  finite element  space defined on $\widehat{\mathcal{T}}_h$.  We construct
an enriching operator $E_h: V_h \rightarrow X_h$ by averaging the discontinuous values at intersection
points.   Let $\widehat{\mathcal{N}}_h$ denote all vertices in $\widehat{\mathcal{T}}_h$, and one has
$\mathcal{N}_h \subset \widehat{\mathcal{N}}_h$.  For any $z \in \widehat{\mathcal{N}}_h$,
let $\widehat{\mathcal{T}}_z$ denote the set of all triangles in $\widehat{\mathcal{T}}_h$ having $z$
as their vertex and  define
\begin{equation}\label{eq:enrich}
(E_h v)(z) = \frac{1}{|\widehat{\mathcal{T}}_z|} \sum_{\widehat{T} \in \widehat{\mathcal{T}}_z}
v_{\widehat{T}}(z),
\end{equation}
with $|\widehat{\mathcal{T}}_z|$ being the cardinality of $\widehat{\mathcal{T}}_z$ and $v_{\widehat{T}} = v|_{\widehat{T}}$.
We can define $E_hv$ on $\Omega$ by standard linear finite element
interpolation in $X_h$ after obtaining the values $(E_hv)(z)$ at all vertices.
It is easy to see that $(E_hv)(z) = v(z)$ for all $z \in  \widehat{\mathcal{N}}_h\cap
\mathcal{N}_h$, which means  $(E_hv)(z) = v(z)$ for all $z \in  \widehat{\mathcal{N}}_h$
provided that $v$ is continuous.

\begin{remark}
The purpose of the enriching operator is to make  the discontinuous immersed finite element solution become continuous as the true solution.
 \end{remark}

For the enriching operator $E_h$, we can prove the following error estimate.
\begin{theorem}
 For any $v\in V_h$, one has
 \begin{equation}\label{eq:estimate}
\sum_{T\in \widehat{\mathcal{T}}_h}
\|E_hv - v\|^2_{0, T}  \lesssim
h^2\sum_{T\in \mathcal{T}_h}|v|_{1, T}^2.
\end{equation}
\end{theorem}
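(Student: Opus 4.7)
The plan is to exploit that both $E_h v$ and $v$ are piecewise linear polynomials on the refined mesh $\widehat{\mathcal{T}}_h$, reduce the $L^2$ estimate to a sum of squared nodal discrepancies at the vertices $\widehat{\mathcal{N}}_h$, and then identify the surviving nodal discrepancies with jumps of $v$ across interface edges which are controllable by $|v|_{1,T}$.

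First, I would verify that for each $\widehat{T}\in\widehat{\mathcal{T}}_h$, $v|_{\widehat{T}}$ is an affine polynomial. This uses the construction of the IFEM basis: $\phi_i^+$ and $\phi_i^-$ agree on $\Gamma_h|_T$ by \eqref{eq:fluxeq}, so $v$ is continuous across $\Gamma_h$, and the auxiliary line simply subdivides the $T^+$ (or $T^-$) region on which $v$ is already linear. Thus $E_h v - v$ is linear on each $\widehat{T}$. By the scaling/norm-equivalence on the reference triangle together with $|\widehat{T}|\lesssim h^2$, one obtains
\begin{equation*}
\|E_h v - v\|_{0,\widehat{T}}^2 \lesssim h^2 \sum_{z\in\widehat{T}\cap\widehat{\mathcal{N}}_h} |(E_h v)(z) - v_{\widehat{T}}(z)|^2.
\end{equation*}

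Second, I would localize the nodal discrepancies. For $z\in\mathcal{N}_h$, all $v_{\widehat{T}'}(z)$ agree by the definition of $V_h$, so $(E_h v)(z) = v(z) = v_{\widehat{T}}(z)$ and the contribution vanishes. For a new vertex $z^e\in\widehat{\mathcal{N}}_h\setminus\mathcal{N}_h$, $z^e$ is the intersection of some $e\in\mathcal{E}_h^i$ with $\Gamma$. Because $v$ is continuous inside each interface element $T\in\mathcal{T}_h^i$ (across $\Gamma_h|_T$ and the auxiliary line), the value $v_{\widehat{T}'}(z^e)$ takes only two distinct values as $\widehat{T}'$ ranges over $\widehat{\mathcal{T}}_{z^e}$: one from each side of $e$. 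Consequently, the deviation from the average is controlled by the edge jump,
\begin{equation*}
|(E_h v)(z^e) - v_{\widehat{T}}(z^e)| \lesssim |[v](z^e)|.
\end{equation*}

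Third, I would bound $|[v](z^e)|$ by $|v|_{1}$ on the two mesh triangles $T_1,T_2$ sharing $e$. Since $v$ agrees at the endpoints $z_1,z_2$ of $e$ (both lie in $\mathcal{N}_h$), for each $T_i\in\{T_1,T_2\}$ I can write $v_{T_i}(z^e) - v(z_\ell) = \int_{z_\ell}^{z^e}\partial_t v_{T_i}\,ds$, where the path lies inside a single subregion of $T_i$ on which $v_{T_i}$ is affine. Applying this to whichever endpoint keeps the path in the larger subregion and using the piecewise-linear inverse estimate $\|\nabla v\|_{L^\infty(\text{subregion})}^2 \lesssim h^{-2}\|\nabla v\|_{L^2(T_i)}^2$ (valid under the usual assumption that $\Gamma$ is resolved so that the relevant subregion has area $\gtrsim$ square of its diameter, which is the standard hypothesis in \cite{LiLinWu2003,JiChenLi2014}), I obtain
\begin{equation*}
|[v](z^e)|^2 \lesssim |v|_{1,T_1}^2 + |v|_{1,T_2}^2.
\end{equation*}

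Finally I would assemble the estimates: summing the local bound over $\widehat{T}\in\widehat{\mathcal{T}}_h$, grouping nodal contributions at each $z^e$, and using that each $T\in\mathcal{T}_h$ has at most three edges yields \eqref{eq:estimate}. The main obstacle I anticipate is the third step, specifically controlling $\|\nabla v\|_{L^\infty}$ on a possibly thin IFEM subregion by the global $H^1$ seminorm; this requires choosing the integration endpoint carefully and invoking the geometric hypothesis that guarantees shape-regularity of the sliver in the direction relevant to $e$. The first two steps are essentially bookkeeping about $\widehat{\mathcal{T}}_h$ and continuity of the IFEM basis.
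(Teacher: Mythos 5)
Your proof follows essentially the same route as the paper's: reduce the $L^2$ error on each subtriangle of $\widehat{\mathcal{T}}_h$ to squared nodal discrepancies, observe that these vanish at the original vertices $\mathcal{N}_h$ and are controlled by the edge jump $[v](z^e)$ at the new vertices, then bound that jump by passing through a shared endpoint $p\in\mathcal{N}_h$ of the interface edge (where $v_{T_1}(p)=v_{T_2}(p)$) and applying a mean-value/inverse-estimate argument to obtain $|v|_{1,T_1}^2+|v|_{1,T_2}^2$. The obstacle you flag in your third step --- controlling $\|\nabla v\|_{L^\infty}$ on a possibly thin IFEM subregion --- is precisely the point the paper compresses into the phrase ``mean value theorem'' with a citation, so your treatment is, if anything, more explicit about the one genuinely delicate step than the original.
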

\begin{proof}
 For any $z \in  \widehat{\mathcal{N}}_h\setminus\mathcal{N}_h$,   there exists an $e\in \mathcal{E}_h^{i}$  so that $z \in e$.
 Let $T_1 $ and $T_2$ be the two triangles  in $\mathcal{T}_h$
 so that $T_1\cap T_2 = e$. Then $\widehat{T}\subset T_1$ or $\widehat{T}\subset T_2$  for any $\widehat{T} \in \widehat{\mathcal{T}}_z$.
 Hence $ v_{\widehat{T}}(z) = v_{T_1}(z)$ or  $ v_{\widehat{T}}(z) = v_{T_2}(z)$.
 Then for any $\widehat{T}_a, \widehat{T}_b \in \widehat{\mathcal{T}}_z$,  one has
 \begin{equation}\label{eq:scale}
\begin{split}
 &[v_{\widehat{T}_a}(z) - v_{\widehat{T}_b}(z) ]^2 \\
 \leq &[v_{T_1}(z) - v_{T_2}(z) ]^2\\
 \leq &[v_{T_1}(z) - v_{T_1}(p) ]^2 + [v_{T_1}(p) - v_{T_2}(p) ]^2 + [v_{T_2}(p) - v_{T_2}(z) ]^2\\
 = & [v_{T_1}(z) - v_{T_1}(p) ]^2 +  [v_{T_2}(p) - v_{T_2}(z) ]^2\\
 \lesssim & |v|^2_{1, T_1\cup T_2},
 \end{split}
\end{equation}
where $p\in \mathcal{N}_h$ and  we have used the mean value theorem \cite{Brenner2003} in the last inequality.

Combining  \eqref{eq:enrich} and \eqref{eq:scale} gives, for any $\widehat{T} \in \widehat{\mathcal{T}}_z$,
\begin{equation}
[(E_hv-v_{\widehat{T}})(z)]^2 \lesssim  |v|^2_{1, T_1\cup T_2},  \forall v \in V_h,
\end{equation}
which implies that
\begin{equation}
\begin{split}
 \|E_hv-v\|^2_{0, \widehat{T}} & \leq |\widehat{T}| \sum_{z \in \mathcal{N}(\widehat{T})}[(E_hv-v_{\widehat{T}})(z)]^2\\
 &\lesssim h^2  \sum_{T \in \mathcal{T}(\widehat{T})}|v|^2_{1, T},
\end{split}
\end{equation}
where $\mathcal{T}(\widehat{T}) = \{ T \in \mathcal{T}_h:  T \cap \widehat{T} \neq \emptyset\}$.
Taking summation over all $\widehat{\mathcal{T}}_h$ produces the inequality  \eqref{eq:estimate}.
  \end{proof}

\begin{corollary}\label{cor:bound}
 For any $v \in V_h$,  we have
\begin{align}
 &\|E_hv\|_{0, \Omega}\lesssim \|v\|_{0, \Omega}\label{eq:l2bound},\\
 &| E_hv|_{1, \Omega} \lesssim | v|_{1, \Omega}\label{eq:h1bound}.
\end{align}
\end{corollary}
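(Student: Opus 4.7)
Both bounds should follow by combining the triangle inequality, the preceding Theorem~3.1, and an appropriate inverse inequality; the strategy is the one familiar from the stability analysis of enriching operators for nonconforming finite elements (Brenner, Gudi). Throughout I treat $|v|_{1,\Omega}^2$ as the broken seminorm $\sum_{T\in\mathcal{T}_h}|v|_{1,T}^2$, which is well defined for $v\in V_h$.

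For \eqref{eq:l2bound} I would first write
$\|E_h v\|_{0,\Omega}\le \|E_h v-v\|_{0,\Omega}+\|v\|_{0,\Omega}$,
apply Theorem~3.1 to get $\|E_h v-v\|_{0,\Omega}\lesssim h\,|v|_{1,\Omega}$, and then absorb the right-hand side using the standard inverse estimate for the immersed finite element space $V_h$ on the quasi-uniform mesh $\mathcal{T}_h$, namely $|v|_{1,T}\lesssim h^{-1}\|v\|_{0,T}$ for each $T\in\mathcal{T}_h$. Summing this over $T$ yields $h\,|v|_{1,\Omega}\lesssim\|v\|_{0,\Omega}$, and \eqref{eq:l2bound} follows.

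For \eqref{eq:h1bound} I would apply the triangle inequality on each $\widehat T\in\widehat{\mathcal{T}}_h$, giving
$|E_h v|_{1,\widehat T}\le |E_h v-v|_{1,\widehat T}+|v|_{1,\widehat T}$.
Squaring and summing, the second group of terms equals $|v|_{1,\Omega}^2$ because $v$ restricted to any $\widehat T\subset T$ is affine and $\widehat{\mathcal{T}}_h$ refines $\mathcal{T}_h$. For the first group I would use that $E_h v-v$ is piecewise affine on $\widehat{\mathcal{T}}_h$ and invoke the inverse estimate $|E_h v-v|_{1,\widehat T}\lesssim h^{-1}\|E_h v-v\|_{0,\widehat T}$ on each subtriangle. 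Summing and feeding in \eqref{eq:estimate} from Theorem~3.1 collapses everything to $|v|_{1,\Omega}^2$, which gives \eqref{eq:h1bound}.

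\textbf{Main obstacle.} The delicate point is the inverse estimate on $\widehat{\mathcal{T}}_h$, since the body-fitted refinement can create anisotropic subtriangles: the naive constant in $|w|_{1,\widehat T}\lesssim h^{-1}\|w\|_{0,\widehat T}$ degenerates when the smallest altitude of $\widehat T$ is much less than $h$. Two ingredients should rescue it. First, the construction of $\widehat{\mathcal{T}}_h$ forces each subtriangle to have an angle in $[\pi/4,3\pi/4]$, preventing it from becoming a ``sliver'' in the worst sense. Second, the function $E_h v-v$ is affine on $\widehat T$ and, as noted in the proof of Theorem~3.1, vanishes identically at the vertices lying in $\mathcal{N}_h$; a careful scaling argument using these two facts should produce an inverse estimate with a constant depending only on the geometry of $\mathcal{T}_h$, not on the particular location of the interface cut. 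This geometric/scaling step is the only place where real care is needed; once it is in hand, the rest of the argument is the bookkeeping described above.
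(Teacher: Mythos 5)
Your route is the same as the paper's: for \eqref{eq:l2bound} you use the triangle inequality, Theorem~3.1, and the inverse inequality for $V_h$ on the shape-regular mesh $\mathcal{T}_h$ (this half matches the paper's proof essentially line by line), and for \eqref{eq:h1bound} you use the triangle inequality, an inverse estimate applied to $E_hv-v$, and Theorem~3.1 again, which is exactly the paper's chain $|E_hv|_{1,\Omega}\le|E_hv-v|_{1,\Omega}+|v|_{1,\Omega}\lesssim h^{-1}\|E_hv-v\|_{0,\Omega}+|v|_{1,\Omega}\lesssim|v|_{1,\Omega}$. You deserve credit for flagging what the paper passes over in silence: $E_hv-v$ is only piecewise affine with respect to $\widehat{\mathcal{T}}_h$, and $\widehat{\mathcal{T}}_h$ is not shape regular (the paper itself remarks that it ``can contain narrow triangles''), so the constant in that inverse estimate cannot be taken as $h^{-1}$ for free.

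However, your proposed repair does not close this gap. The two facts you invoke --- a good angle in each subtriangle and the vanishing of $E_hv-v$ at the vertices in $\mathcal{N}_h$ --- do not imply an inverse estimate with constant $h^{-1}$. Concretely, let $\widehat T$ be the small triangle cut off at a corner $z_1\in\mathcal{N}_h$ with both intersection points $z_4,z_5$ at distance $\epsilon\ll h$ from $z_1$, and let $w$ be affine with $w(z_1)=0$, $w(z_4)=w(z_5)=a$: this $\widehat T$ is perfectly shape regular and $w$ vanishes at its only $\mathcal{N}_h$-vertex, yet $|w|_{1,\widehat T}\sim|a|$ while $h^{-1}\|w\|_{0,\widehat T}\sim(\epsilon/h)|a|$, so the constant degenerates like $h/\epsilon$; the obstruction is that subtriangles can be much \emph{smaller} than $h$ (or needle-shaped), not only badly angled. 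Worse, the combination ``conclusion of Theorem~3.1 $+$ some inverse estimate'' cannot suffice in principle: Theorem~3.1 only gives $\|w\|_{0,\widehat T}\lesssim h\,|v|_{1,\omega}$, which on the above $\widehat T$ is compatible with $|a|$ as large as $(h/\epsilon)|v|_{1,\omega}$ and hence with $|w|_{1,\widehat T}\gg|v|_{1,\omega}$. A correct argument must instead use the \emph{pointwise} information inside the proof of Theorem~3.1, namely the bound \eqref{eq:scale} giving $|a|\lesssim|v|_{1,T_1\cup T_2}$, together with an inverse estimate scaled by $\mathrm{diam}(\widehat T)$ rather than $h$; that handles all shape-regular subtriangles of any size, and for genuinely thin subtriangles one additionally needs that the jump of $v\in V_h$ across an interface edge, evaluated at the cut point, scales linearly in the distance from that point to the nearest mesh node --- a property of the IFE construction (with a coefficient-contrast-dependent constant) that neither you nor the paper establishes. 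Your instinct that this is ``the only place where real care is needed'' is right, but the care you propose is not the care required; note also that the same criticism applies to the paper's own one-line appeal to a ``standard inverse estimate.''
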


\begin{proof}
 We first prove the inequality \eqref{eq:l2bound}.  Notice that
 \begin{equation}
\begin{split}
 \|E_hv\|_{0, \Omega}&\lesssim \|E_hv-v\|_{0, \Omega} + \|v\|_{0, \Omega}\\
 &\lesssim  h \|\nabla v\|_{0, \Omega} + \|v\|_{0, \Omega}\\
 &\lesssim  \|v\|_{0, \Omega},
 \end{split}
\end{equation}
where we have used the standard inverse estimate \cite{Ciarlet2002, BrennerScott2008} in the last inequality.
Using \eqref{eq:estimate} and standard  inverse estimate yields
\begin{equation}
\begin{split}
 |E_hv|_{1, \Omega} &\le |E_hv-v|_{1, \Omega} + |v|_{1, \Omega} \\
 &\lesssim h^{-1}\|E_hv-v\|_{0, \Omega} + |v|_{1, \Omega}\\
 &\lesssim |v|_{1, \Omega},
\end{split}
\end{equation}
which completes our proof.
\end{proof}
\subsection{Gradient Recovery Operator}
The edges of $\widehat{\mathcal{T}}_h$ with both ending points lying on $\Gamma$ form an approximation of the interface $\Gamma$, denoted by $\Gamma_h$, then the triangulation $\widehat{\mathcal{T}}_h$ is divided into the following two disjoint sets by $\Gamma_h$:
\begin{align}
 &\widehat{\mathcal{T}}^-_h:=\left\{  T\in \mathcal{T}_h| \text{ all three vertices of  } T \text{ are in  }\overline{\Omega^-}  \right\},\label{equ:mmesh}\\
 & \widehat{\mathcal{T}}^+_h:=\left\{  T\in \mathcal{T}_h| \text{ all three vertices of  } T \text{ are in  }\overline{\Omega^+}  \right\} . \label{equ:pmesh}
\end{align}
Suppose $X_h^-$ and $X_h^+$ are the continuous linear finite element spaces defined on $ \widehat{\mathcal{T}}^-_h$ and $ \widehat{\mathcal{T}}^+_h$ respectively.

Let $G_h^I: X_h \rightarrow (X_h^-\cup X_h^+) \times (X_h^-\cup X_h^+)$ be the immersed polynomial preserving recovery (IPPR) operator
introduced in \cite{GuoYang2016}.  Let $u_h$ be the solution of  either  symmetric and consistent immersed finite element method or  Petrov-Galerkin immersed finite element method.
The recovered gradient of $u_h$ is defined as
\begin{equation}\label{eq:gr}
R_h u_h = G_h^I(E_hu_h).
\end{equation}

\begin{remark}
 The proposed gradient recovery method consists of two steps:  firstly, we enrich the immersed finite element solution by the enriching operator; then we recover the gradient of the enriched solution.
\end{remark}
It is easy to see that $R_h$ is a linear operator from $V_h$ to $(X_h^-\cup X_h^+) \times (X_h^-\cup X_h^+)$, and one can prove the following boundedness results.
\begin{theorem}\label{thm:grbd}
Denote $R_h$ to be the recovered operator defined in \eqref{eq:gr}, and then
 \begin{equation}
\|R_hu_h\|_{0, \Omega^-\cup \Omega^+} \lesssim |u_h|_{1, h}.
\end{equation}
\end{theorem}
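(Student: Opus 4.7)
The plan is to factor the operator $R_h = G_h^I \circ E_h$ and apply the bounds on the two factors in succession. Since $G_h^I$ is the immersed polynomial preserving recovery operator constructed in the authors' previous work \cite{GuoYang2016}, one of its key properties established there is a boundedness estimate of the form $\|G_h^I w\|_{0, \Omega^-\cup\Omega^+} \lesssim |w|_{1, \Omega}$ for $w \in X_h$. Taking $w = E_h u_h \in X_h$ gives immediately
\begin{equation*}
\|R_h u_h\|_{0, \Omega^-\cup \Omega^+} = \|G_h^I(E_h u_h)\|_{0, \Omega^-\cup\Omega^+} \lesssim |E_h u_h|_{1, \Omega}.
\end{equation*}

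Next I would invoke Corollary \ref{cor:bound}, specifically the $H^1$-type estimate \eqref{eq:h1bound}, which states $|E_h v|_{1,\Omega} \lesssim |v|_{1,\Omega}$ for $v \in V_h$. Because $V_h$ is a nonconforming space, the right-hand seminorm here is understood element-wise on $\mathcal{T}_h$, i.e.\ it is exactly the broken seminorm $|\cdot|_{1,h}$. Applied to $v = u_h$, this yields $|E_h u_h|_{1, \Omega} \lesssim |u_h|_{1, h}$, and chaining the two estimates completes the argument.

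The main obstacle, such as it is, is purely bookkeeping: one must verify that the boundedness property of $G_h^I$ quoted above is literally the statement proved in \cite{GuoYang2016} for the class $X_h$ (continuous linear finite elements on the body-fitted refinement $\widehat{\mathcal{T}}_h$), rather than for the slightly different function class considered there. Because $X_h$ is the natural input space for the IPPR operator and because $E_h u_h \in X_h$ by the very construction of $E_h$, this compatibility should be immediate; no new estimate beyond Corollary \ref{cor:bound} and the cited result is needed. Everything else is a one-line composition, so the theorem follows without any further calculation.
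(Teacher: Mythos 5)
Your proposal is correct and follows essentially the same route as the paper: boundedness of the IPPR operator $G_h^I$ from \cite{GuoYang2016} composed with the $H^1$ stability of the enriching operator in Corollary \ref{cor:bound}. The only cosmetic difference is that the paper applies the $G_h^I$ bound separately on $\Omega^-$ and $\Omega^+$ and then sums, which is exactly the subdomain-wise bookkeeping you flagged as the remaining verification.
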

\begin{proof}
 By the definition of IPPR recovery operator in \cite{GuoYang2016}, we have
 \begin{equation*}
\|R_hu_h\|_{0, \Omega^-} = \|G_h^IE_hu_h\|_{0, \Omega^-}  \lesssim  |E_hu_h|_{1, \Omega^-},
\end{equation*}
and
 \begin{equation*}
\|R_hu_h\|_{0, \Omega^+} = \|G_h^IE_hu_h\|_{0, \Omega^+}  \lesssim  |E_hu_h|_{1, \Omega^+}.
\end{equation*}
Then the estimate follows by that
\begin{equation}
\begin{split}
 \|R_hu_h\|_{0, \Omega^-\cup \Omega^+} \le & \|R_hu_h\|_{0, \Omega^-} + \|R_hu_h\|_{0, \Omega^-}\\
\lesssim  &|E_hu_h|_{1, \Omega^-} +  |E_hu_h|_{1, \Omega^+}\\
\lesssim  &|E_hu_h|_{1, \Omega}\\
\lesssim  &|u_h|_{1, \Omega},
\end{split}
\end{equation}
where we have used Corollary \ref{cor:bound}.
\end{proof}

Theorem \ref{thm:grbd} implies $R_h$ is a linear bounded operator. Moreover, we have the following consistency result:
\begin{theorem}\label{thm:grcst}
 Let $R_h: V_h \rightarrow (X_h^-\cup X_h^+)\times (X_h^-\cup X_h^+)$ be the gradient recovery operator defined in \eqref{eq:gr}. Given
 $u\in H^{3}(\Omega^-\cup\Omega^+) \cap C^0(\Omega)$, one has
 \begin{equation}
\|R_hu_I - \nabla u \|_{0, \Omega} \lesssim h^{2}\|u\|_{3, \Omega^-\cup\Omega^+},
\label{equ:grbd}
\end{equation}
where $u_I$ is interpolation of $u$ into linear finite element space $X_h$.
\end{theorem}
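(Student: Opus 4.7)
The proof reduces to the consistency of the immersed polynomial preserving recovery operator $G_h^I$ already established in \cite{GuoYang2016}. The first step is to eliminate the enriching operator from the estimate. Since $u\in C^0(\Omega)$, its piecewise-linear interpolant $u_I\in X_h$ is globally continuous on $\overline{\Omega}$, and in particular takes a single well-defined value at every node $z\in\widehat{\mathcal{N}}_h$. By the defining formula \eqref{eq:enrich}, averaging a constant sequence of values returns that value, so $E_h u_I = u_I$ and hence $R_h u_I = G_h^I(E_h u_I) = G_h^I u_I$. This identity is the only point at which $E_h$ enters; after it is applied, the problem is purely about the IPPR operator acting on a standard $X_h$ interpolant.

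Next I would invoke the IPPR consistency estimate from \cite{GuoYang2016}. Because $\widehat{\mathcal{T}}_h$ is body-fitted, every triangle in $\widehat{\mathcal{T}}_h$ lies entirely in $\overline{\Omega^-}$ or $\overline{\Omega^+}$; consequently $u_I|_{\Omega^-}\in X_h^-$ and $u_I|_{\Omega^+}\in X_h^+$, placing us in exactly the setting covered by \cite{GuoYang2016}. Applied on each subdomain, that result yields $\|G_h^I u_I - \nabla u\|_{0,\Omega^\pm}\lesssim h^2\|u\|_{3,\Omega^\pm}$. Since $\Gamma$ has Lebesgue measure zero, $\|\cdot\|_{0,\Omega}^2 = \|\cdot\|_{0,\Omega^-}^2 + \|\cdot\|_{0,\Omega^+}^2$, so summing the two subdomain bounds and taking square roots gives \eqref{equ:grbd}.

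The substantive obstacle does not actually live in this theorem but in the IPPR consistency that it imports. The body-fitted refinement $\widehat{\mathcal{T}}_h$ necessarily contains anisotropic narrow triangles along $\Gamma$, so classical polynomial preserving recovery arguments, which rely on shape-regular patches, do not apply directly. The construction in \cite{GuoYang2016} circumvents this by restricting each local least-squares sampling set to nodes that lie in a single subdomain on a shape-regular sub-patch away from $\Gamma$. A related subtlety is the $\mathcal{O}(h^2)$ mismatch strip between $\Gamma$ and its piecewise linear approximation $\Gamma_h$, on which $R_h u_I$ may use the polynomial from the geometrically wrong subdomain; the IPPR consistency in \cite{GuoYang2016} is designed to absorb this mismatch into the stated $h^2$ bound, which is precisely what makes the reduction above produce the optimal rate.
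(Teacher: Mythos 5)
Your proposal is correct and follows essentially the same route as the paper: use the continuity of $u$ (hence of $u_I$) to conclude $E_h u_I = u_I$, so that $R_h u_I = G_h^I u_I$, and then invoke the IPPR consistency estimate (Theorem 3.6 of \cite{GuoYang2016}) to obtain the $h^2$ bound. Your additional subdomain-by-subdomain splitting and the commentary on where the real difficulty lies (inside the cited IPPR result) are sound elaborations but do not change the argument.
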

\begin{proof}
 Since $u \in C^0(\Omega)$, one has that $u_I \in C^0(\Omega)$ and then $E_hu_I = u_I$.
 Therefore, we have $R_hu_I= G_h^IE_hu_I = G_hu_I$. Theorem  3.6 in \cite{GuoYang2016} implies that
  \begin{equation*}
\|R_hu_I - \nabla u \|_{0, \Omega}  = \|G_h^Iu_I - \nabla u \|_{0, \Omega}
\le h^{2}\|u\|_{3, \Omega^-\cup\Omega^+},
\end{equation*}
which completes our proof.
\end{proof}

\begin{remark} Theorem \ref{thm:grcst} implies $R_h$ is consistent. In addition, it is a local gradient recovery operator. Therefore, $R_h$
satisfies the three conditions of a good gradient recovery operator described in \cite{AinsworthOden2000}, and should serve as an
ideal candidate of gradient recovery operator for both SCIFEM and PGIFEM.
\end{remark}

\begin{remark}  One of the most practical applications of gradient recovery techniques is to construct asymptotically exact {\it a posteriori} error estimators \cite{AinsworthOden2000,Babuska2001,Guo2016,NagaZhang2004, ZZ1992a,ZZ1992b}
for adaptive computational methods.   Based on the recovery operator $R_h$, one can define a local {\it a posteriori} error
estimator on element $T\in \mathcal{T}_h$ as
\begin{equation*}\label{equ:localind}
\eta_T =
\left\{
\begin{array}{lcc}
    \|\beta^{1/2}(R_hu_h - \nabla u_h)\|_{0, T}, &  \text{if } T \in \mathcal{T}_h^r, \\
   \left(\sum\limits_{\widehat{T}\subset T, \widehat{T}\in \widehat{T}_h} \|\beta^{1/2}(R_hu_h - \nabla u_h)\|_{0, \widehat{T}}^2\right)^{\frac{1}{2}}, &   \text{if } T \in \mathcal{T}_h^i,
\end{array}
\right.
\end{equation*}
and the corresponding global error estimator  as
\begin{equation*}\label{equ:globalind}
\eta_h = \left( \sum_{T\in \mathcal{T}_h}\eta_T^2\right)^{1/2},
\end{equation*}
which provides an asymptotically exact {\it a posteriori} error estimator for SCIFEM and PGIFEM.  The readers are referred to \cite{ChenXiaoZhang2009,WuLiLai2011} for residual-type {\it a posteriori} error estimator for immersed
finite element methods.
\end{remark}

\section{Numerical Results}  In the section, we give serval numerical examples to verify the superconvergence
of gradient recovery methods for both SCIFEM and PGIFEM.  The computational domain of all the examples
are chosen as $\Omega = [-1, 1]\times [-1, 1]$.  The uniform triangulation of $\Omega$ is obtained by dividing $\Omega$ into $N^2$
subsquares and then dividing each subsquare into two right triangles.  In all the following tests, we take $N=2^k$ with $k = 5, 6, 7, 8, 9, 10, 11$.
For convenience, we shall use the following error norms in all examples:
\begin{equation}
De:=\|u-u_h\|_{1,\Omega},\quad D^ie:=\|\nabla u_I- \nabla u_h\|_{0, \Omega},\quad
D^re:=\|\nabla u-R_hu_h\|_{0, \Omega}.\\
 \end{equation}

{\bf Example 4.1.} In this example, we  consider  the elliptic interface problem  \eqref{eq:model} with a circular interface of radius $r_0 = 0.6$ as studied in \cite{LiLinWu2003}.
 The exact solution is
\begin{equation*}
u(z) =
\left\{
\begin{array}{ll}
    \frac{r^3}{\beta^-}   &  \text{if }   z\in \Omega_-, \\
      \frac{r^3}{\beta^+} + \left( \frac{1}{\beta^-}-\frac{1}{\beta^+} \right)r_0^3&  \text{if } z \in \Omega^+,\\
   \end{array}
\right.
\end{equation*}
where $r = \sqrt{x^2+y^2}$.

Tables \ref{tab:ex1asc}--\ref{tab:ex1cpg} show the numerical results  of both SCIFEM and PGIFEM with three typical different
jump ratios: $\beta^-/\beta^+ = 1/10$ (moderate jump), $\beta^-/\beta^+ = 1/1000$ (large jump),
and $\beta^-/\beta^+ = 1000$ (large jump).   In all different cases,  optimal $\mathcal{O}(h)$ convergence can be observed for $H^1$-semi error
of finite element solution, which consists with the numerical results in  \cite{JiChenLi2014, HouLiu2005, HouWuZhang2004}.  The recovered gradient superconverges to the exact gradient at a rate of $\mathcal{O}(h^{1.5})$.  Moreover, we numerically
observe the supercloseness between gradient of the finite element solution and its finite element interpolation for both SCIFEM and PGIFEM; see column~5 of Tables \ref{tab:ex1asc}--\ref{tab:ex1cpg}.


\begin{table}[htb!]
\centering
\caption{Numerical results of SCIFEM for Example 4.1 with $\beta^+=10, \beta^-=1$. }\label{tab:ex1asc}
\begin{tabular}{|c|c|c|c|c|c|c|c|}
\hline
 $N$ & $De$ & order& $D^{i}e$ & order& $D^{r}_re$ & order\\ \hline\hline
 32 &5.71e-02&--&1.47e-02&--&2.19e-02&--\\ \hline
 64 &2.94e-02&0.96&4.48e-03&1.72&7.48e-03&1.55\\ \hline
 128 &1.47e-02&1.00&1.84e-03&1.28&2.31e-03&1.69\\ \hline
 256 &7.38e-03&0.99&6.46e-04&1.51&7.40e-04&1.64\\ \hline
 512 &3.70e-03&1.00&2.36e-04&1.45&2.93e-04&1.34\\ \hline
 1024 &1.85e-03&1.00&8.11e-05&1.54&1.02e-04&1.52\\ \hline
 2048 &9.26e-04&1.00&2.83e-05&1.52&3.44e-05&1.57\\ \hline
\end{tabular}
\end{table}

\begin{table}[htb!]\label{tab:ex1apg}
\centering
\caption{Numerical results of PGIFEM for Example 4.1 with $\beta^+=10, \beta^-=1$. }
\begin{tabular}{|c|c|c|c|c|c|c|c|}
\hline
 $N$ & $De$ & order& $D^{i}e$ & order& $D^{r}_re$ & order\\ \hline\hline
 32 &5.92e-02&--&2.15e-02&--&3.09e-02&--\\ \hline
 64 &2.98e-02&0.99&6.61e-03&1.71&1.01e-02&1.61\\ \hline
 128 &1.48e-02&1.01&2.59e-03&1.35&3.33e-03&1.61\\ \hline
 256 &7.41e-03&1.00&9.00e-04&1.53&1.08e-03&1.63\\ \hline
 512 &3.71e-03&1.00&3.27e-04&1.46&4.11e-04&1.39\\ \hline
 1024 &1.85e-03&1.00&1.13e-04&1.54&1.43e-04&1.52\\ \hline
 2048 &9.26e-04&1.00&3.99e-05&1.50&4.92e-05&1.54\\ \hline
\end{tabular}
\end{table}

\begin{table}[htb!]\label{tab:ex1bsc}
\centering
\caption{Numerical results of SCIFEM for Example 4.1 with $\beta^+=1000, \beta^-=1$. }
\begin{tabular}{|c|c|c|c|c|c|c|c|}
\hline
 $N$ & $De$ & order& $D^{i}e$ & order& $D^{r}_re$ & order\\ \hline\hline
 32 &5.69e-02&--&2.43e-02&--&2.46e-02&--\\ \hline
 64 &2.77e-02&1.04&3.50e-03&2.79&6.44e-03&1.93\\ \hline
 128 &1.38e-02&1.00&1.61e-03&1.12&1.95e-03&1.72\\ \hline
 256 &6.95e-03&0.99&5.36e-04&1.58&6.34e-04&1.62\\ \hline
 512 &3.49e-03&1.00&1.95e-04&1.46&2.54e-04&1.32\\ \hline
 1024 &1.75e-03&1.00&6.61e-05&1.56&8.76e-05&1.53\\ \hline
 2048 &8.74e-04&1.00&2.29e-05&1.53&2.98e-05&1.55\\ \hline
\end{tabular}
\end{table}

\begin{table}[htb!]\label{tab:ex1cpg}
\centering
\caption{Numerical results of PGIFEM for Example 4.1 with $\beta^+=1000, \beta^-=1$. }
\begin{tabular}{|c|c|c|c|c|c|c|c|}
\hline
 $N$ & $De$ & order& $D^{i}e$ & order& $D^{r}_re$ & order\\ \hline\hline
 32 &5.95e-02&--&3.27e-02&--&4.55e-02&--\\ \hline
 64 &2.91e-02&1.03&9.35e-03&1.81&1.21e-02&1.91\\ \hline
 128 &1.44e-02&1.02&4.03e-03&1.21&4.54e-03&1.41\\ \hline
 256 &7.10e-03&1.02&1.45e-03&1.48&1.48e-03&1.62\\ \hline
 512 &3.53e-03&1.01&5.68e-04&1.35&5.94e-04&1.31\\ \hline
 1024 &1.76e-03&1.01&1.90e-04&1.58&1.95e-04&1.60\\ \hline
 2048 &8.76e-04&1.00&6.80e-05&1.48&6.96e-05&1.49\\ \hline
\end{tabular}
\end{table}

\begin{table}[htb!]\label{tab:ex1csc}
\centering
\caption{Numerical results of SCIFEM for Example 4.1 with $\beta^+=1, \beta^-=1000$. }
\begin{tabular}{|c|c|c|c|c|c|c|c|}
\hline
 $N$ & $De$ & order& $D^{i}e$ & order& $D^{r}_re$ & order\\ \hline\hline
 32 &1.95e-01&--&1.35e-02&--&1.92e-02&--\\ \hline
 64 &9.79e-02&1.00&3.60e-03&1.91&8.14e-03&1.24\\ \hline
 128 &4.90e-02&1.00&1.48e-03&1.28&2.21e-03&1.88\\ \hline
 256 &2.45e-02&1.00&5.56e-04&1.42&7.46e-04&1.57\\ \hline
 512 &1.23e-02&1.00&1.81e-04&1.61&2.38e-04&1.65\\ \hline
 1024 &6.13e-03&1.00&6.44e-05&1.49&8.57e-05&1.48\\ \hline
 2048 &3.06e-03&1.00&2.33e-05&1.47&2.99e-05&1.52\\ \hline
\end{tabular}
\end{table}

\begin{table}[htb!]
\centering
\caption{Numerical results of PGIFEM for Example 4.1 with $\beta^+=1, \beta^-=1000$. }\label{tab:ex1cpg}
\begin{tabular}{|c|c|c|c|c|c|c|c|}
\hline
 $N$ & $De$ & order& $D^{i}e$ & order& $D^{r}_re$ & order\\ \hline\hline
 32 &5.95e-02&--&3.27e-02&--&4.55e-02&--\\ \hline
 64 &2.91e-02&1.03&9.35e-03&1.81&1.21e-02&1.91\\ \hline
 128 &1.44e-02&1.02&4.03e-03&1.21&4.54e-03&1.41\\ \hline
 256 &7.10e-03&1.02&1.45e-03&1.48&1.48e-03&1.62\\ \hline
 512 &3.53e-03&1.01&5.68e-04&1.35&5.94e-04&1.31\\ \hline
 1024 &1.76e-03&1.01&1.90e-04&1.58&1.95e-04&1.60\\ \hline
 2048 &8.76e-04&1.00&6.80e-05&1.48&6.96e-05&1.49\\ \hline
\end{tabular}
\end{table}

\begin{figure}[ht]
\centering
\subfigure[]{%
     \includegraphics[width=0.43\textwidth]{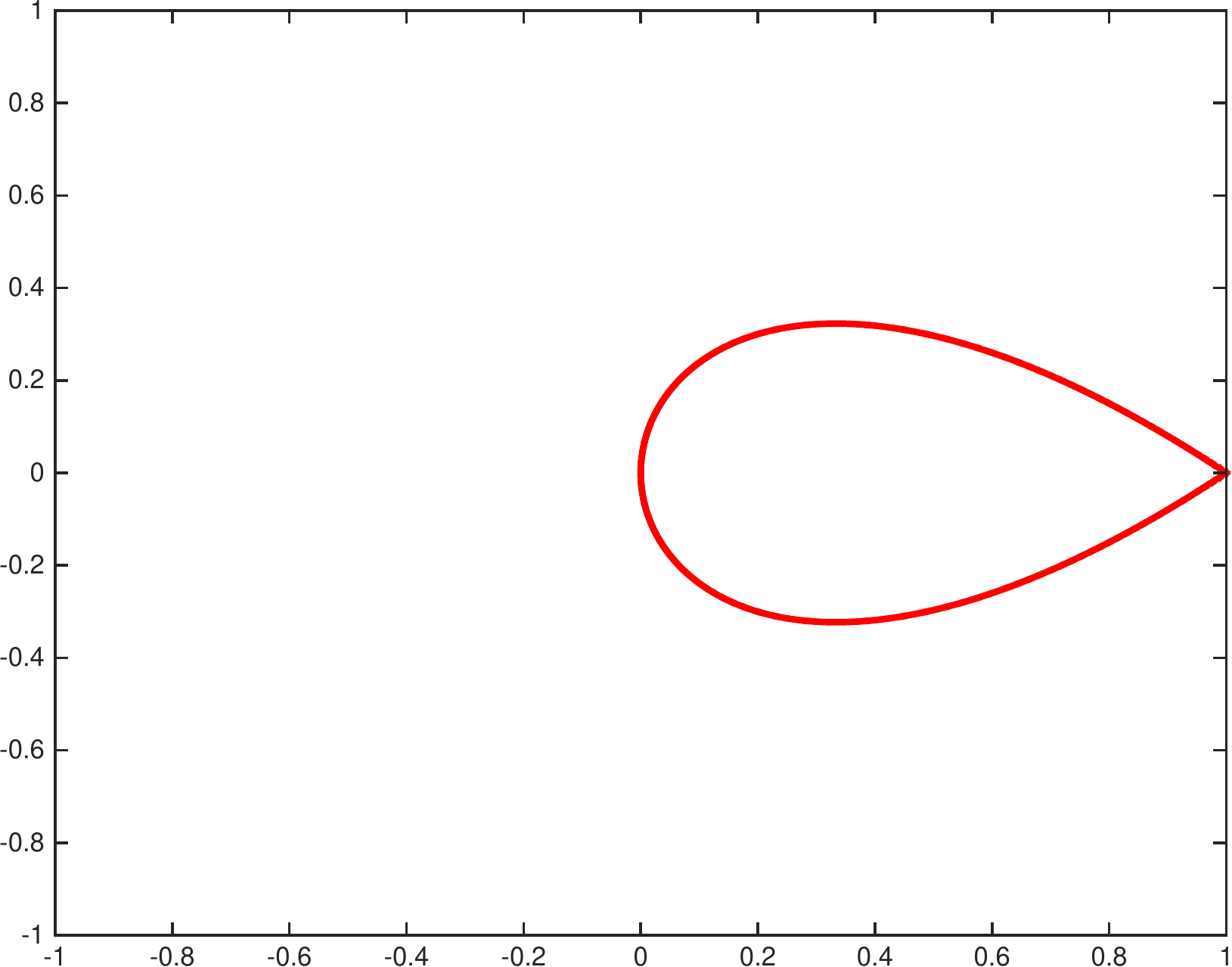}
  \label{fig:sharpint}}
\quad
\subfigure[]{%
     \includegraphics[width=0.47\textwidth]{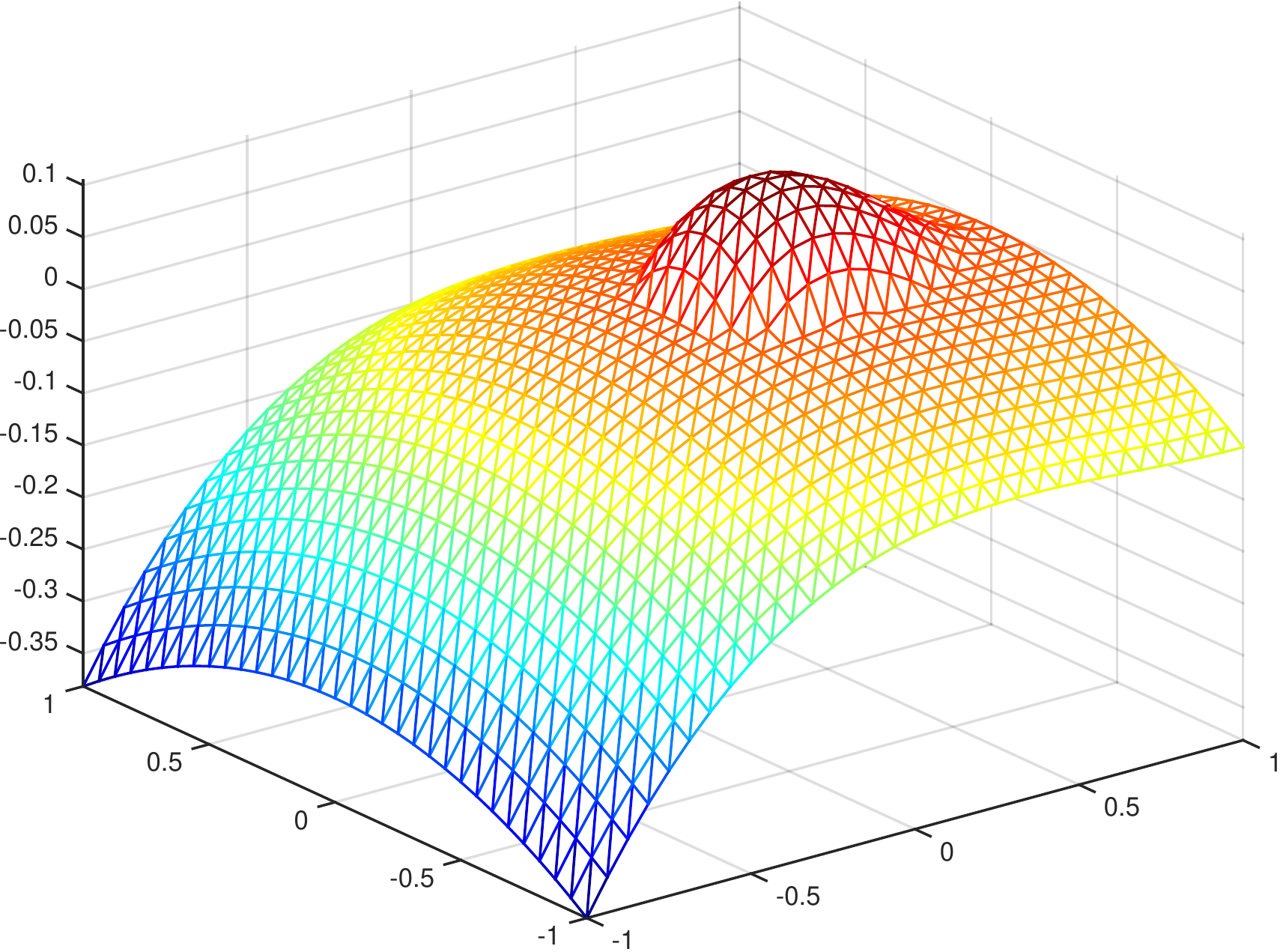}
  \label{fig:sharpsol}}
\caption{Example 2 with $\beta^+=10, \beta^-=1$:  (a)  Shape of interface; (b) Numerical solution of PGIFEM on the coarsest mesh used in Table~\ref{tab:ex2pg}.}
\label{fig:sharp}
\end{figure}

\begin{figure}[ht]
\centering
\subfigure[]{%
     \includegraphics[width=0.45\textwidth]{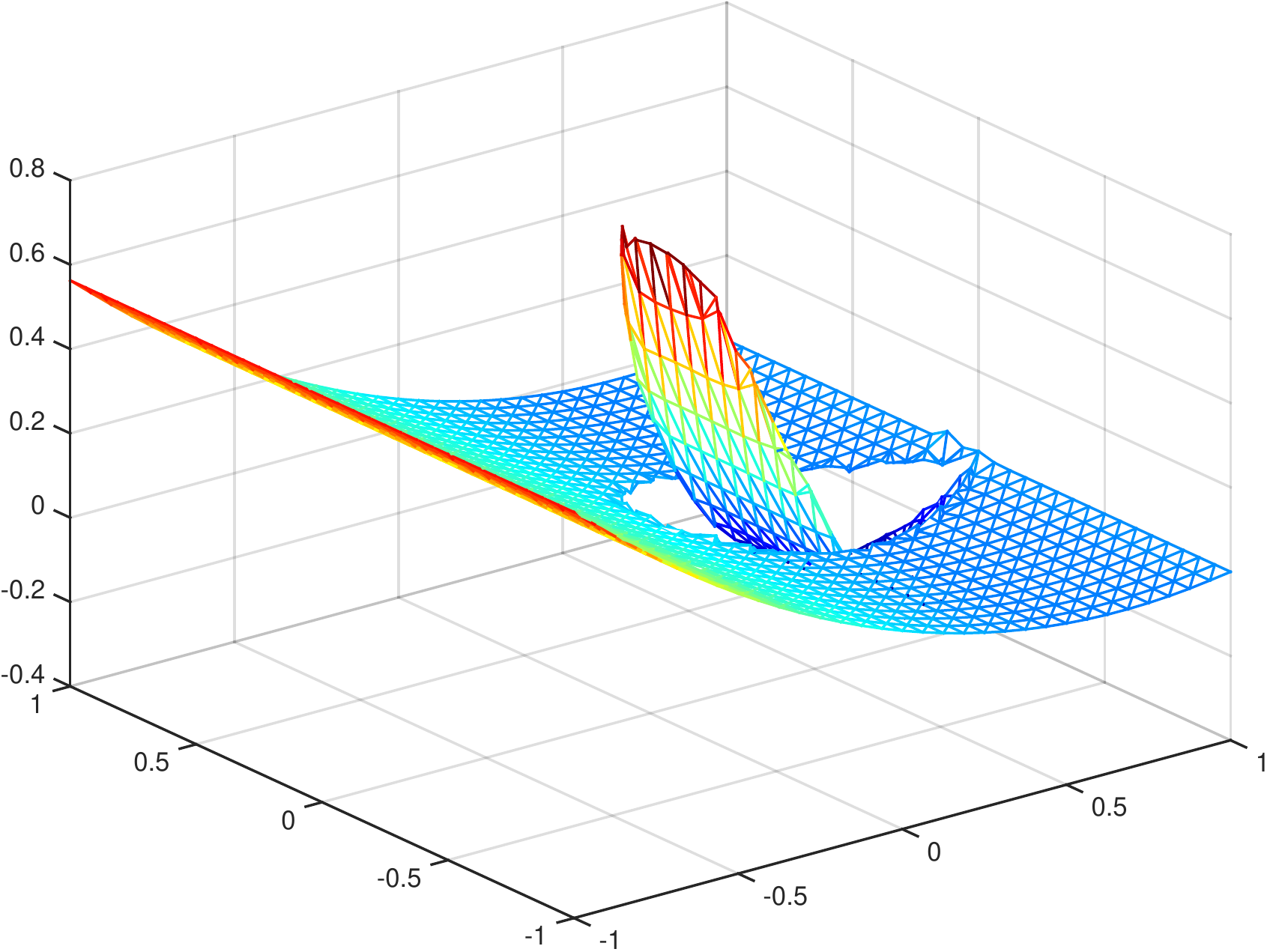}
  \label{fig:sharprx}}
\quad
\subfigure[]{%
     \includegraphics[width=0.45\textwidth]{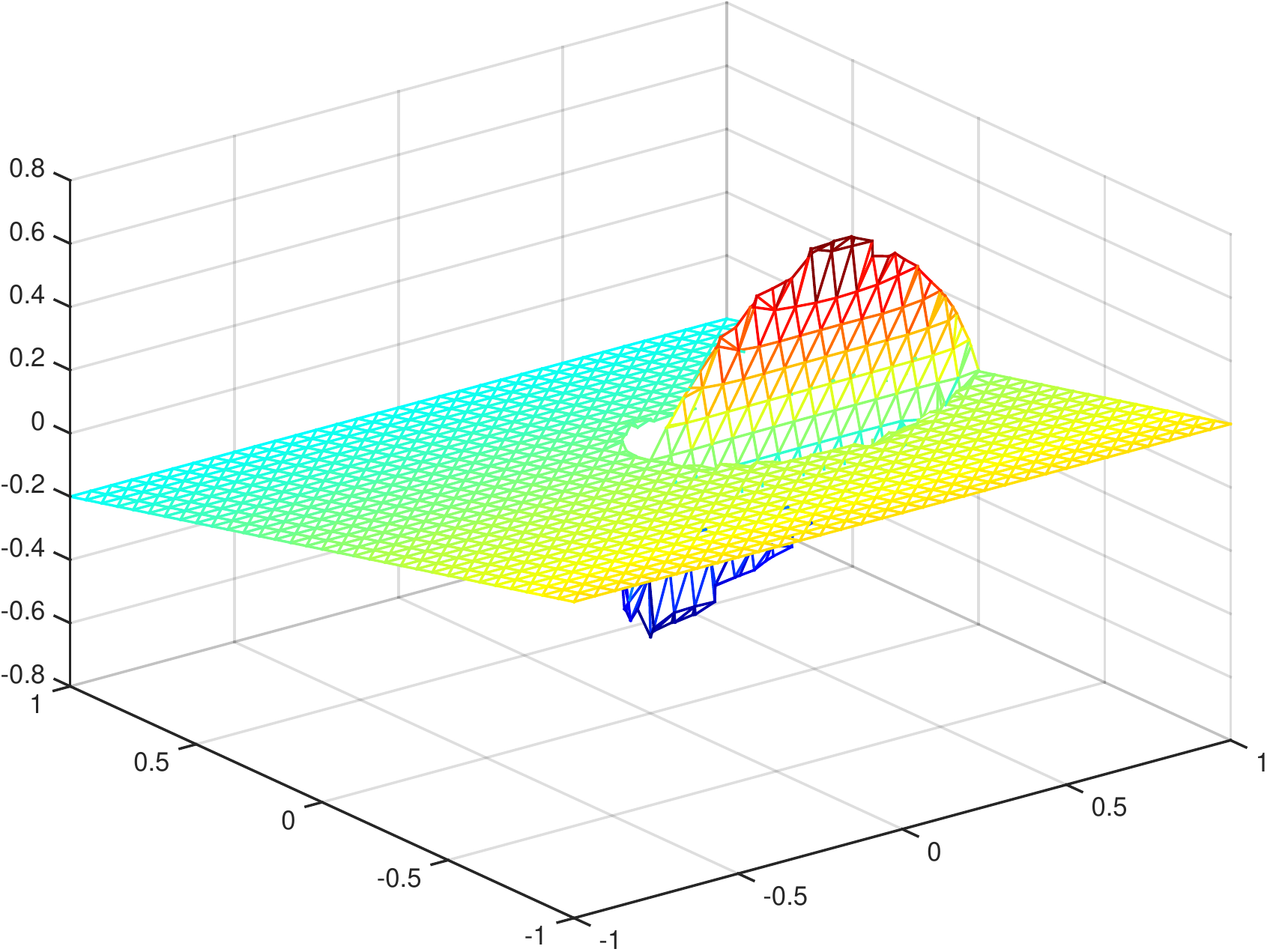}
  \label{fig:sharpry}}
\caption{Plots of  recovered gradient based on PGIFEM for Example 4.2 with $\beta^+=10, \beta^-=1$: (a) $x$-component; (b) $y$-component.}
\label{fig:sharpgrad}
\end{figure}

\begin{table}[htb!]
\centering
\caption{Numerical results of SCIFEM for Example 4.2.}\label{tab:ex2sc}
\begin{tabular}{|c|c|c|c|c|c|c|c|}
\hline
 $N$ & $De$ & order& $D^{i}e$ & order& $D^{r}_re$ & order\\ \hline\hline
 32 &3.04e-02&--&7.01e-03&--&1.12e-02&--\\ \hline
 64 &1.54e-02&0.98&4.63e-03&0.60&4.16e-03&1.42\\ \hline
 128 &7.44e-03&1.05&8.71e-04&2.41&1.02e-03&2.03\\ \hline
 256 &3.71e-03&1.01&3.55e-04&1.30&4.28e-04&1.25\\ \hline
 512 &1.85e-03&1.00&1.27e-04&1.49&1.52e-04&1.49\\ \hline
 1024 &9.24e-04&1.00&4.31e-05&1.56&5.50e-05&1.47\\ \hline
 2048 &4.62e-04&1.00&1.55e-05&1.48&1.99e-05&1.47\\ \hline
\end{tabular}
\end{table}

\begin{table}[htb!]
\centering
\caption{Numerical results of PGIFEM for Example 4.2.}\label{tab:ex2pg}
\begin{tabular}{|c|c|c|c|c|c|c|c|}
\hline
 $N$ & $De$ & order& $D^{i}e$ & order& $D^{r}_re$ & order\\ \hline\hline
 32 &3.57e-02&--&1.98e-02&--&2.09e-02&--\\ \hline
 64 &1.70e-02&1.08&8.37e-03&1.24&8.09e-03&1.37\\ \hline
 128 &7.96e-03&1.09&2.95e-03&1.50&2.73e-03&1.57\\ \hline
 256 &3.82e-03&1.06&9.97e-04&1.57&9.41e-04&1.54\\ \hline
 512 &1.88e-03&1.02&3.72e-04&1.42&3.54e-04&1.41\\ \hline
 1024 &9.32e-04&1.02&1.29e-04&1.53&1.24e-04&1.51\\ \hline
 2048 &4.64e-04&1.01&4.57e-05&1.50&4.31e-05&1.52\\ \hline
\end{tabular}
\end{table}
{\bf Example 4.2.} In this example, we consider the elliptic interface problem \eqref{eq:model} with shape edge as in \cite{JiChenLi2014,KwakWeeChang2010}.
The level set function of the interface is $\phi = -y^2+((x-1)\tan(\theta))^2x$ with $\theta$ being a parameter.  The interface is displayed in
Figure \ref{fig:sharpint}.  The right hand function $f$
is chosen to fit the exact solution $u(x,y) = \phi(x,y)/\beta$.

Numerically we test the case $\beta^- = 1$ and $\beta^+=1000$ when $\theta =40$.  The corresponding numerical results
are shown in Tables \ref{tab:ex2sc} and \ref{tab:ex2pg}, from which one can see that $De$ decays at a optimal rate of $\mathcal{O}(h)$, while $D^ie$ and $D^re$ tend to zero at a superconvergent rate of $\mathcal{O}(h^{1.5})$.     Figure \ref{fig:sharpsol} plots the numerical solution of PGIFEM on the coarsest mesh and
Figure \ref{fig:sharpgrad} shows  the recovered gradient.

\begin{table}[htb!]
\centering
\caption{Numerical results of SCIFEM for Example 4.3. }\label{tab:ex3sc}
\begin{tabular}{|c|c|c|c|c|c|c|c|}
\hline
 $N$ & $De$ & order& $D^{i}e$ & order& $D^{r}_re$ & order\\ \hline\hline
 32 &1.19e+00&--&1.61e-01&--&1.93e-01&--\\ \hline
 64 &5.93e-01&1.00&5.98e-02&1.43&7.24e-02&1.42\\ \hline
 128 &2.96e-01&1.00&2.14e-02&1.48&2.69e-02&1.43\\ \hline
 256 &1.48e-01&1.00&7.80e-03&1.46&9.66e-03&1.48\\ \hline
 512 &7.41e-02&1.00&2.75e-03&1.50&3.49e-03&1.47\\ \hline
 1024 &3.70e-02&1.00&9.84e-04&1.48&1.23e-03&1.51\\ \hline
 2048 &1.85e-02&1.00&3.49e-04&1.50&4.37e-04&1.49\\ \hline
\end{tabular}
\end{table}

\begin{table}[htb!]
\centering
\caption{Numerical results of PGIFEM for Example 4.3.}\label{tab:ex3pg}
\begin{tabular}{|c|c|c|c|c|c|c|c|}
\hline
 $N$ & $De$ & order& $D^{i}e$ & order& $D^{r}_re$ & order\\ \hline\hline
 32 &1.19e+00&--&1.55e-01&--&1.89e-01&--\\ \hline
 64 &5.93e-01&1.00&5.81e-02&1.42&7.19e-02&1.39\\ \hline
 128 &2.96e-01&1.00&2.09e-02&1.48&2.66e-02&1.43\\ \hline
 256 &1.48e-01&1.00&7.61e-03&1.45&9.56e-03&1.48\\ \hline
 512 &7.41e-02&1.00&2.68e-03&1.50&3.45e-03&1.47\\ \hline
 1024 &3.70e-02&1.00&9.61e-04&1.48&1.22e-03&1.51\\ \hline
 2048 &1.85e-02&1.00&3.41e-04&1.50&4.33e-04&1.49\\ \hline
\end{tabular}
\end{table}

{\bf Example 4.3.}  In the example, we consider the elliptic interface problem  \eqref{eq:model} with ellipse interface given by the zero level set of
the function $\phi(x, y) = \frac{x^2}{0.5^2}+\frac{y^2}{0.25^2}-1$ as studied in \cite{JiChenLi2014,KwakWeeChang2010}.
Here, we choose the case of variable coefficient  $\beta(x,y)$ as
\begin{equation*}
\beta(x,y) =
\left\{
\begin{array}{lcc}
    1+0.5(x^2-xy+y^2) &  \text{if } (x,y)\in \Omega^-, \\
   1 &   \text{if } (x,y)\in \Omega^+.
\end{array}
\right.
\end{equation*}
The right hand side function $f$ and boundary condition are given by the exact solution $u(x,y) = \phi(x,y)/\beta(x,y)$.

Tables \ref{tab:ex3sc} and \ref{tab:ex3pg} list the numerical errors, which provide a verification of the $\mathcal{O}(h)$ convergence for semi-$H^1$ error,  and $\mathcal{O}(h^{1.5})$ supercloseness and superconvergence.

\begin{figure}[ht]
\centering
\subfigure[]{%
     \includegraphics[width=0.43\textwidth]{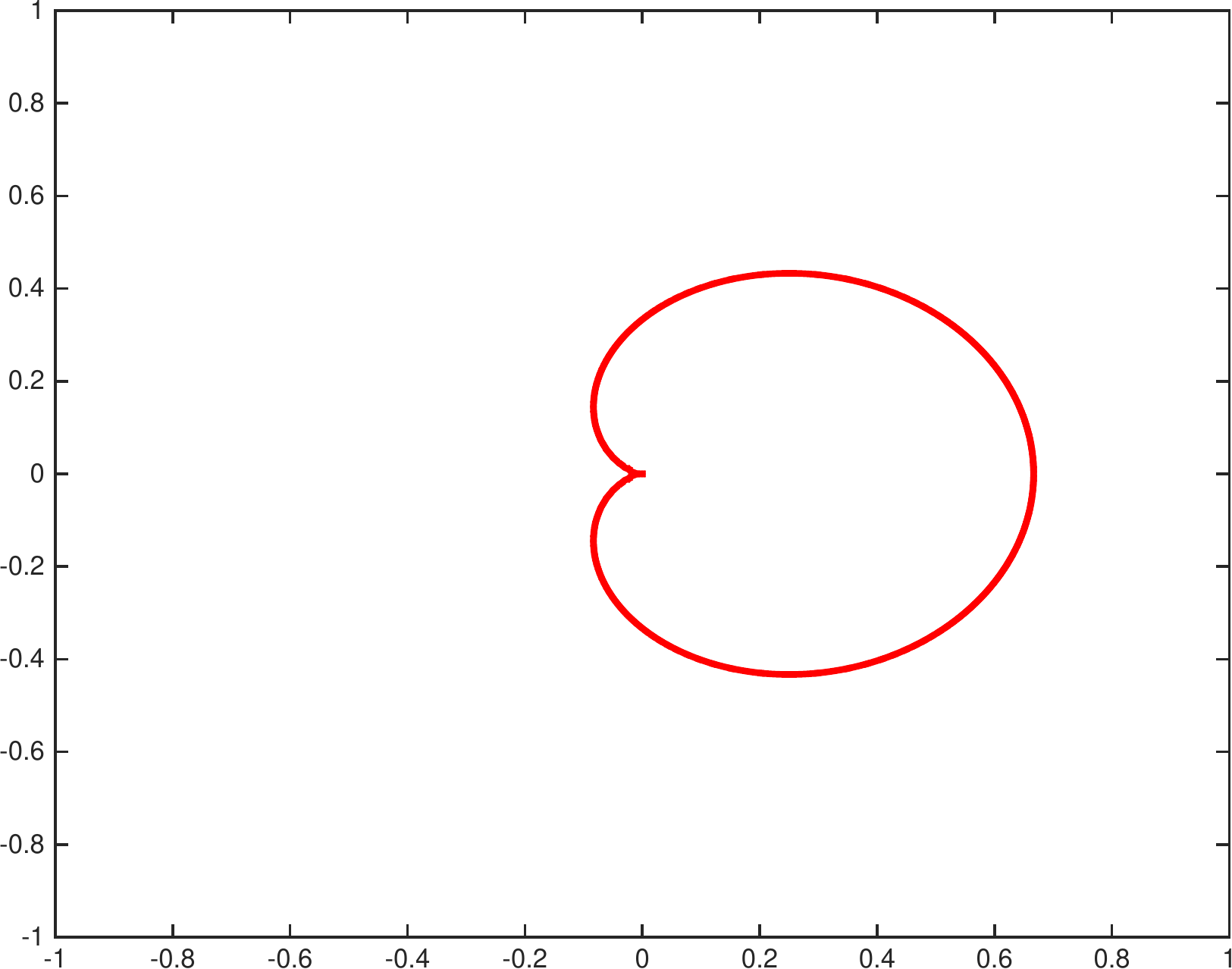}
  \label{fig:cardioidint}}
\quad
\subfigure[]{%
     \includegraphics[width=0.47\textwidth]{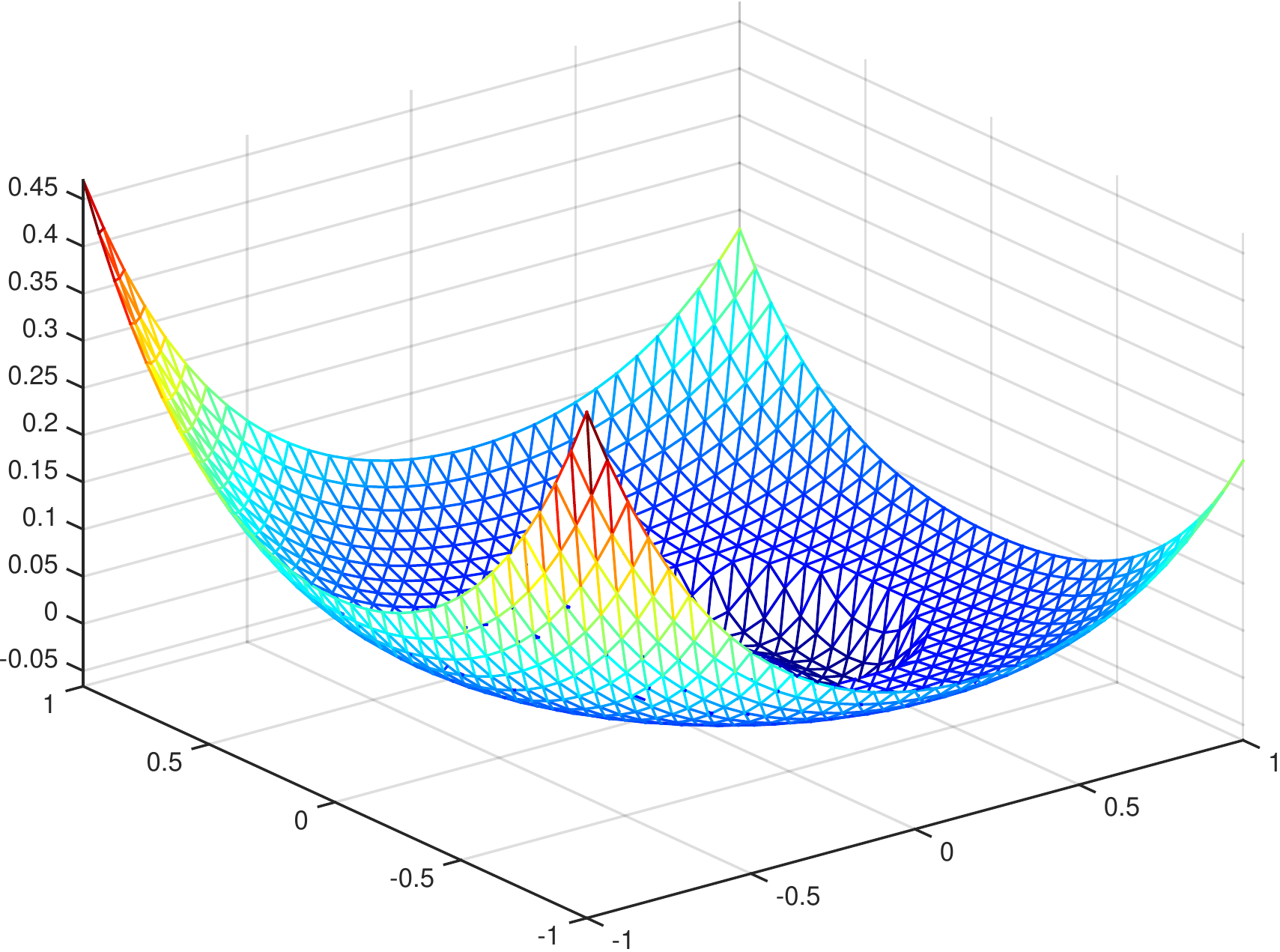}
  \label{fig:cardioidsol}}
\caption{Example 4.4 with $\beta^+=10, \beta^-=1$:  (a)  Shape of interface; (b) Numerical solution of PGIFEM on the coarsest mesh used in Table~\ref{tab:ex4pg}.}
\label{fig:cardioit}
\end{figure}

\begin{figure}[ht]
\centering
\subfigure[]{%
     \includegraphics[width=0.45\textwidth]{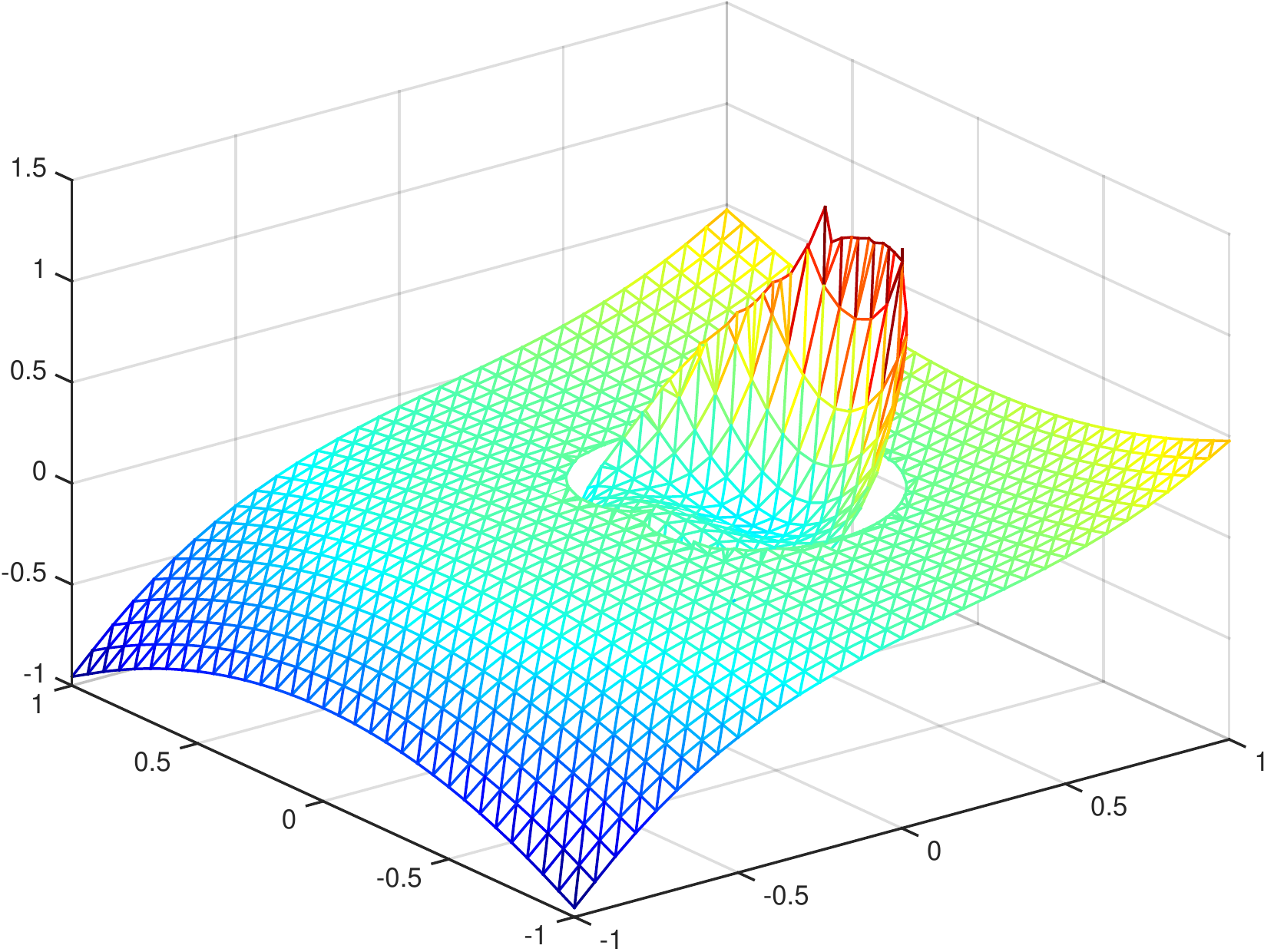}
  \label{fig:circlecx}}
\quad
\subfigure[]{%
     \includegraphics[width=0.45\textwidth]{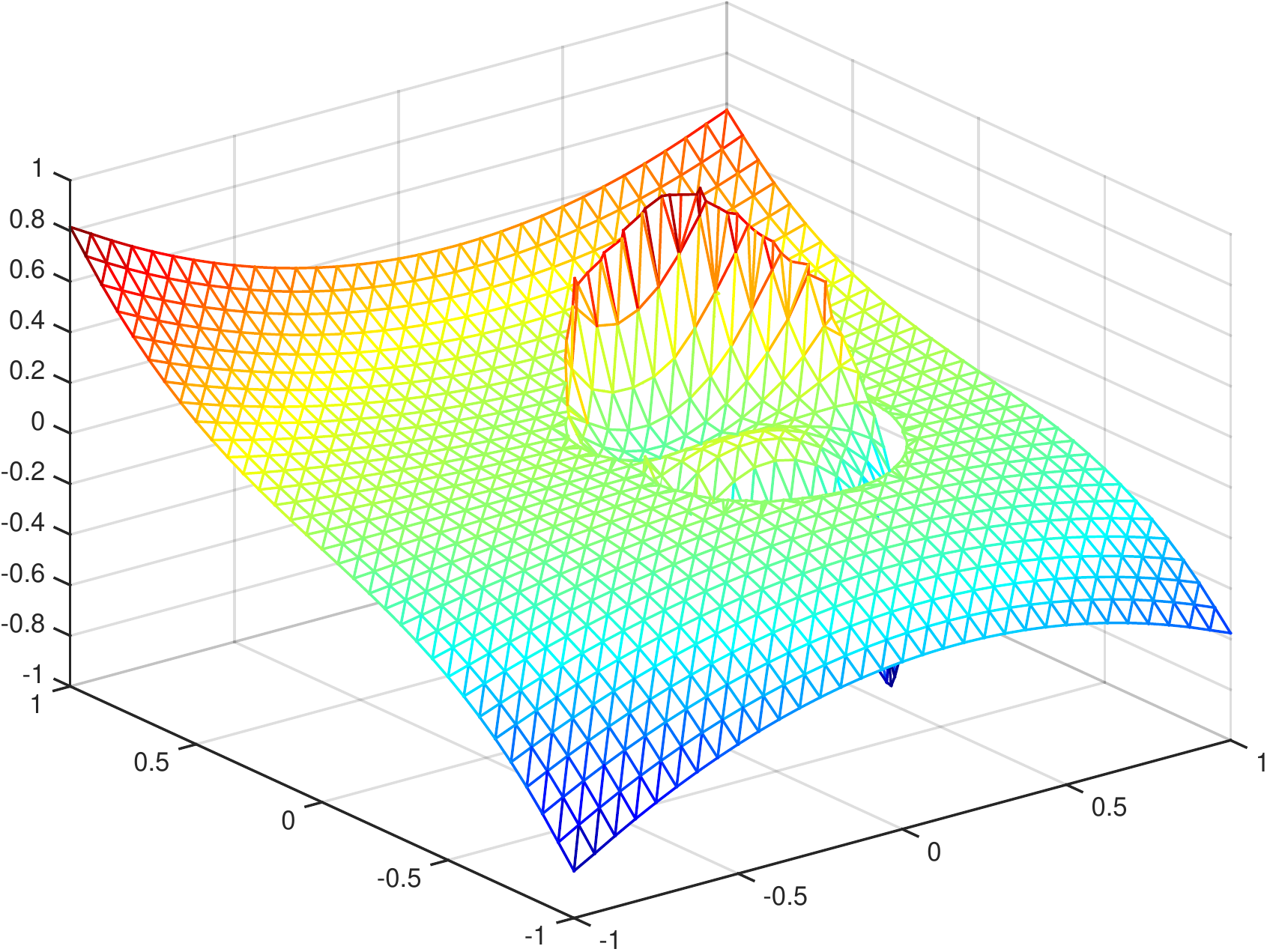}
  \label{fig:circlery}}
\caption{Plots of  recovered gradient based on PGIFEM for Example 4.4 with $\beta^+=10, \beta^-=1$: (a) $x$-component; (b) $y$-component.}
\label{fig:cardioidgrad}
\end{figure}

\begin{table}[htb!]
\centering
\caption{Numerical results of SCIFEM for Example 4.4. }\label{tab:ex4sc}
\begin{tabular}{|c|c|c|c|c|c|c|c|}
\hline
 $N$ & $De$ & order& $D^{i}e$ & order& $D^{r}_re$ & order\\ \hline\hline
 32 &5.59e-02&--&9.84e-03&--&2.51e-02&--\\ \hline
 64 &2.88e-02&0.96&4.09e-03&1.27&7.98e-03&1.65\\ \hline
 128 &1.47e-02&0.98&1.57e-03&1.38&2.30e-03&1.80\\ \hline
 256 &7.39e-03&0.99&5.72e-04&1.46&7.21e-04&1.67\\ \hline
 512 &3.71e-03&0.99&2.05e-04&1.48&2.41e-04&1.58\\ \hline
 1024 &1.86e-03&1.00&7.25e-05&1.50&8.99e-05&1.42\\ \hline
 2048 &9.31e-04&1.00&2.54e-05&1.51&3.15e-05&1.51\\ \hline
\end{tabular}
\end{table}

\begin{table}[htb!]
\centering
\caption{Numerical results of PGIFEM for Example 4.4.  }\label{tab:ex4pg}
\begin{tabular}{|c|c|c|c|c|c|c|c|}
\hline
 $N$ & $De$ & order& $D^{i}e$ & order& $D^{r}_re$ & order\\ \hline\hline
 32 &6.09e-02&--&2.48e-02&--&4.06e-02&--\\ \hline
 64 &3.01e-02&1.01&9.06e-03&1.45&1.36e-02&1.58\\ \hline
 128 &1.50e-02&1.01&3.32e-03&1.45&4.25e-03&1.68\\ \hline
 256 &7.48e-03&1.00&1.16e-03&1.51&1.45e-03&1.55\\ \hline
 512 &3.73e-03&1.00&4.13e-04&1.49&4.87e-04&1.57\\ \hline
 1024 &1.87e-03&1.00&1.44e-04&1.52&1.74e-04&1.49\\ \hline
 2048 &9.32e-04&1.00&5.11e-05&1.49&6.11e-05&1.51\\ \hline
\end{tabular}
\end{table}

{\bf Example 4.4.}  In this example, we consider the interface problem  \eqref{eq:model} with a cardioid interface as in \cite{HouLiu2005}.
The interface curve $\Gamma$ is  the zero level of the function
\begin{equation*}
\phi(x,y) = (3(x^2+y^2)-x)^2-x^2-y^2,
\end{equation*}
as shown Figure \ref{fig:cardioidint}.    We choose  the exact solution $u(x,y) = \phi(x,y)/\beta(x,y)$, where
\begin{equation*}
\beta(x,y) =
\left\{
\begin{array}{lcc}
    xy+3 &  \text{if } (x,y)\in \Omega^-, \\
   100 &   \text{if } (x,y)\in \Omega^+.
\end{array}
\right.
\end{equation*}

As pointed in \cite{HouLiu2005}, the  difficulty of the problem is that the interface  is not even Lipschitz-continuous and has a singular point at
the origin. Figure \ref{fig:cardioidsol} plots the numerical solution of PGIFEM and Figure \ref{fig:cardioidgrad} shows the recovered gradient.   The
numerical errors are given in Tables \ref{tab:ex4sc} and \ref{tab:ex4pg}, from which, one can also observe the optimal convergence
and superconvergence for both SCIFEM and PGIFEM even though the interface is not Lipschitz-continuous.

\section{Conclusion}

In this paper, we develop gradient recovery methods for both symmetric consistent immersed finite method and Petrov-Galerkin immersed finite element method. Theoretically, we prove that the proposed gradient recovery operator has consistency, localization, and boundedness properties.
The superconvergence of recovered gradient is confirmed by four numerical examples using both piecewise
constant and piecewise variable diffusion coefficients. Moreover, we numerically observe the supercloseness between
immersed finite element solution and the linear interpolation of exact solution. Compared to body-fitted mesh-based gradient recover methods, the proposed gradient recovery methods provide a uniform way of recovering gradient on regular meshes.

\section*{Acknowledgement}
This work was partially supported by the NSF grant DMS-1418936, KI-Net NSF RNMS
grant 1107291, and Hellman Family Foundation Faculty Fellowship, UC Santa Barbara.
Part of work was done during the visit of both authors to Beijing Computational Science Research Center,
and we really appreciate their hospitality.

\bibliography{mybibfile}

\end{document}